\newtheorem{theorem}{Theorem}
\newtheorem{corollary}[theorem]{Corollary}
\newtheorem{definition}[theorem]{Definition}
\newtheorem{lemma}[theorem]{Lemma}
\title{Large deviations for equilibrium measures and selection of subaction}
\author{Jairo K. Mengue \\
		\footnotesize{\texttt{jairo.mengue@ufrgs.br}}\\
	\footnotesize{Universidade Federal do Rio Grande do Sul }}
\begin{document}

\maketitle

\begin{abstract}
Given a Lipschitz function $f:\{1,...,d\}^\mathbb{N} \to \mathbb{R}$, for each $\beta>0$ we denote by $\mu_\beta$ the equilibrium measure of $\beta f$ and by $h_\beta$ the main eigenfunction of the Ruelle Operator $L_{\beta f}$. \newline
Assuming that $\{\mu_{\beta}\}_{\beta>0}$ satisfy a large deviation principle, we prove the existence of the uniform limit $V= \lim_{\beta\to+\infty}\frac{1}{\beta}\log(h_{\beta})$. Furthermore, the expression of the deviation function is determined by its values at the points of the union of the supports of maximizing measures. 
\newline
We study a class of potentials having two ergodic maximizing measures and prove that a L.D.P. is satisfied. The deviation function is explicitly exhibited and  does not coincide with the one that appears in the paper by Baraviera-Lopes-Thieullen which considers the case of potentials having a unique maximizing measure.
\end{abstract}

\section{Introduction}

We denote by $X$ the Bernoulli space $\{1,...,d\}^{\mathbb{N}},\,\, \mathbb{N}=\{1,2,3,...\},$ and by $\sigma$ the shift map acting on $X$. The metric considered satisfies $d_{\theta}(x,y) =\theta^{\min\{i,\,x_{i}\neq y_{i}\}}$, where $x=(x_1x_2x_3...)$, $y=(y_1y_2y_3...)$ and $\theta \in (0,1)$ is fixed. If $x_1,...,x_n \in \{1,...,d\}$ and $y=(y_1y_2y_3...)\in X,$ the notation $(x_1...x_ny)$ represents the element $(x_1x_2...x_ny_1y_2y_3....)\in X$. A cylinder is a subset of $X$ of the form $[x_1...x_n]:=\{(x_1...x_ny)\,|\, y\in X\}$.  We denote by
$C(X)$ the set of continuous functions from $X$ to $\mathbb{R}$ and by $P(X)$ the set of probabilities on $X$. 

Let $f:X\to \mathbb{R}$ be a Lipschitz function and,  for each $\beta >0$, denote by $L_{\beta f}$ the Ruelle operator associated with $\beta f$, which is defined by 
\[L_{\beta f}:C(X)\to C(X),\,\,\, (L_{\beta f}(w))(x)=\sum_{a\in\{1,...,d\}}e^{\beta f(ax)}w(ax).\]
 We denote by $\nu_{\beta}$ the eigenmeasure of $L_{\beta f}$, that is, the probability satisfying $\int L_{\beta f}(u)\, d\nu_\beta = e^{P(\beta f)}\int u\, d\nu_\beta$ for any continuous function $u:X \to \mathbb{R}$, and by $h_{\beta}$ the main eigenfunction of $L_{\beta f}$. More precisely, $h_\beta$ is Lipschitz,  $L_{\beta f}(h_\beta)=e^{P(\beta f)} h_\beta$ and $\int h_\beta \, d\nu_\beta= 1$. Let 
$g_\beta := \beta f + \log(h_\beta) - \log(h_\beta \circ \sigma) - P(\beta f)$. The functions $g_\beta$ and $\beta f - P(\beta f)$ are cohomologous and $L_{g_\beta}1 = 1$. The eigenmeasure $\mu_\beta$ of $L_{g_\beta}$ is $\sigma-$invariant and coincides with the equilibrium measure of $\beta f$, that is,
\[ \int \beta f\, d\mu_\beta + h(\mu_\beta) = P(\beta f)=\sup_{\{\mu \in P(X); \,\mu \,\text{is}\,\,\sigma- \text{invariant }\}} \int \beta f \,d\mu +h(\mu).  \]   Furthermore $d\mu_\beta = h_\beta\, d\nu_\beta$.
Classical results on thermodynamic formalism can be found in \cite{Bowen} and \cite{PP}.

 At the zero temperature case, in thermodynamic formalism, the above objects are studied for large $\beta$. In this case some intersections with ergodic optimization appear (\cite{BLL}, \cite{CLT}, \cite{CG}, \cite{Jenkinson}). It is well known, for instance, that $\lim_{\beta \to +\infty}\frac{P(\beta f)}{\beta} = m(f)$, where
\begin{equation}\label{eq10}
m(f) := \sup_{\{\mu \in P(X);\,\mu\,\text{is}\, \sigma-\text{invariant}\}} \int f\, d\mu.
\end{equation}
Any possible limit (weak* topology) of a subsequence of $(\mu_\beta)_{\beta >0}$  attains the supremum in (\ref{eq10}) being called a maximizing measure of $f$. Furthermore, $(\frac{1}{\beta}\log(h_\beta))_{\beta >0}$ is an equicontinuous family and any possible uniform limit $V$ of a subsequence of $(\frac{1}{\beta}\log(h_{\beta}))$ is a calibrated subaction \cite{CLT}, that is, it satisfies, for any $x \in X$, the equation 
\[ \sup_{\sigma(y)=x} [f(y) + V(y) - V(x) - m(f)] =0. \]
The limit function $V$ is Lipschitz and $R_{-} := f + V - V\circ\sigma - m(f)$ is the uniform limit of the corresponding subsequence of $\frac{g_{\beta}}{\beta}$, which satisfies: \newline
1) $R_{-}$ is Lipschitz and $R_{-} \leq 0$, \newline
2) $R_{-}$ and $f-m(f)$ are cohomologous, \newline
3) For any $x\in X$ there exists $y\in \sigma^{-1}(x)$ satisfying $R_{-}(y) =0$.\newline
Define $R_{+}:= -R_{-}$, $R_{+}^{n}(x) := \sum_{j=0}^{n-1}R_{+}(\sigma^{j}(x))$ and $R_{+}^{\infty}(x):=\lim_{n\to+\infty}R_{+}^{n}(x)$ ($R_+^\infty$ can assume the value $+\infty$).

Subactions and maximizing measures are dual objects linked in a particular form when we study the speed of convergence of $\mu_\beta$ to a maximizing measure.
From \cite{BLT} is known that, when the maximizing measure of $f$ 
is unique, there exists the uniform limit $R_{-}$ of $\frac{g_\beta}{\beta}$,\,\, ($\beta \to +\infty$). Furthermore, the measures $(\mu_{\beta})_{\beta>0}$ 
satisfy a Large Deviation Principle (L.D.P.), in the following sense, also used in the present work: there exists a lower semi-continuous function $I:X\to [0,+\infty]$ satisfying, for any cylinder  $k \subset X$,
\[\lim_{\beta\to+\infty} \frac{1}{\beta}\log(\mu_{\beta}(k)) = - \inf_{x\in k} I(x).\]
The deviation function $I$ in \cite{BLT} is given by $I=R_{+}^{\infty}$. It can assume the value $+\infty$.

In  \cite{LM2, M} this result has been generalized. Given $x \in X$, $n\in \mathbb{N}$ and $\beta>0$, consider the probability $m_{x,\beta,n} \in P(X)$ defined by $\int w \,dm_{x,\beta,n}= L_{g_{\beta}}^{n}(w)(x)$. If the maximizing measure of $f$ is unique, then\footnote{ we write $\lim_{n,\beta \to +\infty} a_{\beta,n}= a$ if for any $\epsilon>0$ there exists $L>0$ such that  $n,\beta >L \Rightarrow |a_{\beta,n} -a|<\epsilon$.}      
\begin{equation}\label{eq9}
\lim_{n,\beta\to +\infty}\frac{1}{\beta}\log(m_{x,\beta,n} (k))= \lim_{n,\beta\to +\infty}\frac{1}{\beta}\log( L_{g_{\beta}}^{n}(\chi_{k})(x)) = - \inf_{z\in k} R_{+}^{\infty}(z)
\end{equation} 
for any cylinder $k \subset X$.
 Given a continuous function $w$, $L_{g_{\beta}}^{n}(w)$ converges uniformly to $\int w\, d\mu_\beta$ ($n\to+\infty$). Therefore, for any $x \in X$, the probabilities $\mu_{x,\beta,n}$ converge to $\mu_\beta$ in the weak* topology ($n\to +\infty$). Consequently, from (\ref{eq9}), for any $x \in X$, 
\[\lim_{\beta \to +\infty}\frac{1}{\beta}\log(\mu_\beta (k)) =\lim_{\beta \to +\infty}\lim_{n\to+\infty}\frac{1}{\beta}\log(m_{x,\beta,n} (k)) = - \inf_{z\in k} R_{+}^{\infty}(z).   \]  

In \cite{BMP} the main result of \cite{BLT}, stated above, was proved for a more general class of functions (satisfying the Walters condition) on a countable mixing subshift with the BIP property. 
However, in both works it was assumed the existence of a unique maximizing measure to $f$.

If we do not assume the hypothesis of unicity, then there are some natural questions to be considered: 

\bigskip

\noindent
\textbf{question 1:} there exists   $V:=\lim_{\beta\to+\infty} \frac{1}{\beta}\log(h_\beta)$? \newline
\textbf{question 2:} there exists  $\lim_{\beta\to+\infty} \frac{1}{\beta}\log(\mu_{\beta}(k))$ for any cylinder $k$? \newline
\textbf{question 3:} there are relations between $V$ and the deviation function $I$?

\bigskip

Initially, it is natural to assume that the answer to the question 3 can be obtained generalizing the results in \cite{BLT}. In the case of existing the uniform limit $R_{+}=\lim_{\beta\to+\infty} -g_{\beta}/\beta$,
we could try to prove that $\mu_\beta$ satisfies a L.D.P. with deviation function $I=R_{+}^{\infty}$. However, in \cite{BLM} it is proved that this assertion is false. Even in the case there exist the limits in questions 1 and 2, one can get an explicit example where $I \neq R_{+}^{\infty}$.

We will show that, when the assertion in question 2 is satisfied, an affirmative answer to the question $1$ exists. In this case, we also present an answer to the question $3$, determining relations between $R_+$ and $I$.
Several results are known concerning the problem of selection of a maximizing measure \cite{BLM, BGT, Bremont,  Chazottes, CL, Kempton, Leplaideur, Leplaideur2}. In this work we present an improvement in the study of selection of the subaction as a consequence of our results on large deviations on the first part of the paper.

Define
\[M_{max}(f):= \{\mu\,\,\in\,P(X): \mu \,\,\text{is}\,\, \sigma-\text{invariant} \,\,\text{and}\,\, \int f\,d\mu = m(f)\}\]
and
\[X_{max}(f):= \bigcup_{\mu\in M_{max}(f)} \operatorname{supp}(\mu).\]
$X_{max}(f)$ is called the Mather set of $f$. It is a subset of the Aubri set
\small
\[\Omega(f)=\left\{x\in X\,|\, \lim_{\epsilon \rightarrow 0^+}\sup_{n\geq 1 }\sup_{\substack{d(x,z)<\epsilon\\ \sigma^n(z)=x}}[(f(z)+...+f(\sigma^{n-1}z)-n\cdot m(f)] =0\right\}.\]
\normalsize
Furthermore,  $\operatorname{supp}(\mu)\subset X_{max}(f)$ iff $\operatorname{supp}(\mu)\subset \Omega(f)$ iff  $\mu$ is a maximizing measure of $f$ \cite{CLT}.
 
\vspace{0.5cm}

In the section 2. we will prove the following theorem:

\begin{theorem}\label{teorema2}
With the above notations, suppose that for any cylinder $k\subset X$, there exists $\displaystyle{\lim_{\beta\to+\infty} \frac{1}{\beta}\log(\mu_{\beta}(k))}.$ Then,  denoting $x=(x_{1}x_{2}x_3...)$, \newline

\noindent
1. The family of probabilities $(\mu_\beta)_{\beta >0}$ satisfies a L.D.P. with deviation function  $I:X\to[0,+\infty]$,
\begin{equation} \label{eq6} 
I(x):= -\lim_{n\to+\infty}\lim_{\beta\to+\infty}\frac{1}{\beta}\log(\mu_{\beta}([x_{1}...x_{n}])).
\end{equation} 

\noindent 
2. There exists the uniform limit $R_{+}:= - \lim_{\beta\to+\infty} \frac{g_{\beta}}{\beta}$. It satisfies
 \[I= R_{+} + I\circ\sigma \,\,\,\,\, \text{and} \,\,\,\,\, I \geq R_{+}^{\infty} .\] 

\noindent
3. \begin{equation}\label{eq7}
I(x) = \inf_{y\in X_{max}(f)} \liminf_{n\to+\infty}\left(R_{+}^{n}(x_{1}...x_{n}y) + I(y)\right).
\end{equation}  
If $R_{+}^{\infty}(x)<+\infty$, there exists at least one point $y \in X_{max}(f)$ which is an accumulation point of $\{\sigma^n x\}_{n=1,2,...}$. For any such $y$ 
\[I(x) = R_{+}^{\infty}(x) + I(y).\]

\noindent 
4. There exists the uniform limit $V=\lim_{\beta \to +\infty}\frac{1}{\beta}\log(h_{\beta})$. \newline

\noindent
5. The eigenmeasures $\nu_{\beta}$ satisfy a L.D.P. with deviation function $I+V$.

\end{theorem}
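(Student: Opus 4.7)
The plan is to establish the five parts in order, each using the previous ones. For part 1, set $a_n(x) := -\lim_\beta \frac{1}{\beta}\log\mu_\beta([x_1\ldots x_n])$, which exists by hypothesis and is nondecreasing in $n$ because cylinders nest, so $I(x) = \sup_n a_n(x)$. The inequality $a_N(x) \leq \inf_{z\in k} I(z)$ on the cylinder $k = [x_1\ldots x_N]$ is automatic from $I(z) \geq a_N(z) = a_N(x)$. For the reverse, since $\mu_\beta(k) = \sum_{y_1}\mu_\beta([x_1\ldots x_N y_1])$ is a finite sum, the Laplace principle gives $a_N(x) = \min_{y_1} a_{N+1}(x_1\ldots x_N y_1)$; iterating builds a point $y \in k$ with $a_m(y) = a_N(x)$ for every $m\geq N$, so $I(y) = a_N(x)$.

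For part 2, $L_{g_\beta}^*\mu_\beta = \mu_\beta$ yields the identity $\mu_\beta([x_1\ldots x_n]) = \int_{[x_2\ldots x_n]} e^{g_\beta(x_1 z)}\,d\mu_\beta(z)$, and combining equi-Lipschitzness of $g_\beta/\beta$ with the fact that $[x_2\ldots x_n]$ has diameter at most $\theta^n$ one obtains
\[
\frac{1}{\beta}\log\mu_\beta([x_1\ldots x_n]) = \frac{g_\beta(x_1\tilde x)}{\beta} + \frac{1}{\beta}\log\mu_\beta([x_2\ldots x_n]) + O(\theta^n)
\]
for any $\tilde x \in [x_2\ldots x_n]$. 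The hypothesis makes both logarithmic terms convergent, so any two subsequential uniform limits of $g_\beta/\beta$ agree at cylinder points up to $O(\theta^n)$; density of these points and equicontinuity of $g_\beta/\beta$ force all subsequential limits to coincide, producing $R_+ := -\lim_\beta g_\beta/\beta$ as a genuine uniform limit. Letting $\beta\to\infty$ and then $n\to\infty$ in the display yields $I = R_+ + I\circ\sigma$ (in $[0,+\infty]$); iterating gives $I = R_+^n + I\circ\sigma^n \geq R_+^n$, hence $I \geq R_+^\infty$.

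For part 3, lower semicontinuity of $I$ and the convergence $(x_1\ldots x_n y) \to x$ yield $I(x) \leq \liminf_n I(x_1\ldots x_n y) = \liminf_n\bigl(R_+^n(x_1\ldots x_n y) + I(y)\bigr)$ for every $y$. When $R_+^\infty(x) = +\infty$, equi-Lipschitz control shows $R_+^n(x_1\ldots x_n y) = R_+^n(x) + O(1)$, so the right-hand side is $+\infty$ as well. When $R_+^\infty(x) < +\infty$, any weak$^*$ limit $\mu$ of the empirical measures $\frac{1}{n}\sum_{j=0}^{n-1}\delta_{\sigma^j x}$ is $\sigma$-invariant with $\int R_+\,d\mu = \lim \tfrac{1}{n}R_+^n(x) = 0$, and since integrating $g_\beta/\beta \to -R_+$ against any $\sigma$-invariant $\mu$ gives $\int R_+\,d\mu = m(f) - \int f\,d\mu$, this forces $\mu \in M_{max}(f)$; hence any $y \in \operatorname{supp}(\mu) \subset X_{max}(f)$ is an accumulation point of $\{\sigma^n x\}$. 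Along $\sigma^{n_k}x \to y$, Lipschitz comparison gives $R_+^{n_k}(x_1\ldots x_{n_k} y) \to R_+^\infty(x)$, while the iteration $I(x) = R_+^{n_k}(x) + I(\sigma^{n_k}x)$ forces $I(\sigma^{n_k}x) \to I(x) - R_+^\infty(x)$; lsc then yields $I(y) \leq I(x) - R_+^\infty(x)$, and the chain $I(x) \leq \liminf(\cdot) \leq R_+^\infty(x) + I(y) \leq I(x)$ closes both formulas.

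For parts 4 and 5, the family $u_\beta := \frac{1}{\beta}\log h_\beta$ is equi-Lipschitz and uniformly bounded (from $\int h_\beta\,d\nu_\beta = 1$ together with the Ruelle bound on $\sup h_\beta/\inf h_\beta$), so Arzel\`a--Ascoli supplies subsequential uniform limits. Any such limit $V$ satisfies $V\circ\sigma - V = R_+ + f - m(f)$ by part 2, so the difference of two limits is a continuous $\sigma$-invariant function on the topologically mixing full shift, hence constant. To pin down the constant, use $\int h_\beta^{-1}\,d\mu_\beta = 1$ (from $\mu_\beta = h_\beta\nu_\beta$) together with Varadhan's lemma applied to the LDP from part 1:
\[
0 \,=\, \frac{1}{\beta}\log\int e^{-\beta u_\beta}\,d\mu_\beta \,\longrightarrow\, -\inf_x\bigl(V(x) + I(x)\bigr),
\]
so $\inf(V + I) = 0$ for every limit, forcing uniqueness and proving part 4. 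Part 5 follows by the same Varadhan estimate applied to $\nu_\beta(k) = \int\chi_k\,e^{-\beta u_\beta}\,d\mu_\beta$ for each clopen cylinder $k$, with uniform convergence $u_\beta \to V$: $\lim_\beta \frac{1}{\beta}\log\nu_\beta(k) = -\inf_{z\in k}(I(z) + V(z))$. The main obstacle is the uniqueness of the additive constant in part 4: the cohomology only determines $V$ up to a constant, and identifying a normalization intrinsic to the $\beta\to\infty$ limit requires the relation $\int h_\beta^{-1}\,d\mu_\beta = 1$ paired with Varadhan's lemma, whose applicability rests on the uniform boundedness of $u_\beta$.
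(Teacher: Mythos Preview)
Your proof is correct and shares the paper's overall architecture: part~1 by a Laplace-principle construction of a minimizing point in each cylinder (this is exactly the paper's Lemma~\ref{teorema3}), parts~2--3 via the Gibbs relation $\mu_\beta([x_0\ldots x_n])\approx e^{g_\beta(x)}\mu_\beta([x_1\ldots x_n])$ together with an accumulation point in $X_{max}(f)$, and parts~4--5 by showing that subsequential limits of $\frac{1}{\beta}\log h_\beta$ differ by a constant which the normalization $\nu_\beta(X)=1$ forces to zero.

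The execution differs in three places. In part~2 the paper simply invokes \cite{M} for the existence of $R_+$ and its expression as a double limit of cylinder ratios, while you derive it directly from the eigenmeasure identity and equi-Lipschitzness of $g_\beta/\beta$; your argument is self-contained. In part~3 the paper cites \cite{LMST,M,BMP} for the fact that $R_+^\infty(x)<\infty$ forces an $\omega$-limit point in $X_{max}(f)$, whereas your empirical-measure argument (any weak$^*$ limit of $\tfrac1n\sum\delta_{\sigma^j x}$ is invariant with $\int R_+\,d\mu=0$, hence maximizing) gives a short intrinsic proof. In parts~4--5 the paper avoids Varadhan entirely and uses the elementary squeeze $\nu_\beta(k)\inf_k h_\beta \le \mu_\beta(k)\le \nu_\beta(k)\sup_k h_\beta$ cylinder by cylinder, so it never leaves the cylinder-LDP established in part~1; your Varadhan route is valid, but strictly speaking Varadhan's lemma requires the full LDP (upper bound on closed sets, lower bound on open sets), and part~1 only gives it on cylinders. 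On the totally disconnected space $X$ the cylinder LDP does upgrade to the full LDP by a standard lsc/compactness covering argument, and uniform convergence $u_\beta\to V$ handles the $\beta$-dependent integrand, but you should state this upgrade explicitly rather than leave it implicit. The paper's direct comparison buys you an argument that stays elementary and requires no such extension; your approach buys conciseness and makes the role of the normalization $\int h_\beta^{-1}\,d\mu_\beta=1$ transparent.
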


\bigskip

Some remarks:

1. In the equation (\ref{eq6}) we do not exclude the possibility $I(x)=+\infty$. When we write $I(x)=R_{+}(x)+ I(\sigma x)$ we may have $+\infty = R_{+}(x)+\infty$. Following the above discussion, the function $R_{+}$ is real-valued, nonnegative, Lipschitz and for any $x\in X$, $\min_{a \in \{1,...,d\} }R_{+}(ax)=0$. 
\bigskip

2. Under the hypothesis of the theorem we get:

2.1. There exists at least one point $\tilde{y} \in X_{max}(f)$ satisfying $I(\tilde{y})=0$. Indeed, let $\mu_\infty$ be a probability on $X$ such that, for an increasing sequence  $\beta_i\to +\infty$,  $\mu_{\beta_i}\to \mu_\infty$ (weak* topology). Let $\tilde{y} \in \operatorname{supp}(\mu_\infty)$, that is, $\mu_\infty([y_1...y_n])>0$ for any cylinder $[y_1...y_n]$ containing $\tilde{y}$. In this way, from the hypothesis of the theorem,  
$$\lim_{\beta \to +\infty}\frac{1}{\beta}\log(\mu_\beta([y_1...y_n]))= \lim_{\beta_i \to +\infty}\frac{1}{\beta_i}\log(\mu_{\beta_i}([y_1...y_n])) = 0,$$
because $\mu_{\beta_i}([y_1...y_n])\to \mu_{\infty}([y_1...y_n])>0$. 
It follows from item 1. of the theorem that $I(\tilde{y})=0$. 

\bigskip

2.2. If $R_{+}^\infty(x)<+\infty$ then $I(x)<+\infty$. Consequently, as  $R_{+}^\infty(x)=0$ for any $x\in X_{max}(f)$, we conclude that $I(x)<+\infty$ for any $x \in X_{max}(f)$.  

Indeed, in the proof of the Theorem \ref{teorema2} we will show that (see eq. (\ref{eq11}) below)
\[I(x) \leq \liminf_{n\to +\infty}R_{+}^{n}(x_{1}...x_{n}y) + I(y) \,\,\,\, \forall\, y\in X. \]
 As $I(\tilde{y})=0$ at some point $\tilde{y} \in X_{max}(f)$ and $R_{+}$ is Lipschitz, there exists a constant $c>0$ satisfying
\[I(x) \leq \liminf_{n\to +\infty} R_{+}^{n}(x_{1}...x_{n}\tilde{y}) \leq \liminf_{n\to+\infty}R_{+}^{n}(x) +c(\theta+...+\theta^n) \leq R_{+}^{\infty}(x) + \frac{c\theta}{1-\theta} . \]

\bigskip

2.3. In the equation (\ref{eq7}), if $R_+^\infty(x)=+\infty$ then, following computations as above, we get $$\liminf_{n\to+\infty}\left(R_{+}^{n}(x_{1}...x_{n}y) + I(y)\right)=+\infty\,\,\, \forall y \in X_{max}(f).$$ In this case we write
\[ \inf_{y\in X_{max}(f)} \liminf_{n\to+\infty}\left(R_{+}^{n}(x_{1}...x_{n}y) + I(y)\right) =+\infty. \] 
If $R_+^\infty(x)<+\infty$ then $$\liminf_{n\to+\infty}\left(R_{+}^{n}(x_{1}...x_{n}y) + I(y)\right)<+\infty\,\,\, \forall y \in X_{max}(f).$$ 

\bigskip

2.4. If $I(y)=0$ for all $y\in X_{max}(f)$ then $I(x) = R_{+}^{\infty}(x)$  for all $x \in X$ (it follows from item 3. of the theorem). This is the case, for instance, if the maximizing measure of $f$ is unique.

\bigskip

2.5. The equation (\ref{eq7}) remains valid if we replace $y\in X_{max}(f)$ by $ y \in X$. It follows a similar argument with $\inf_{y \in X_{max}(f)}$ replaced by $\inf_{y \in X}$ in the proof.
\bigskip

3. There exist constants $C_1,C_2>0$ satisfying, for any $x,y \in X$, $n\geq 1$, and $\beta$ sufficiently large, 
\begin{equation}\label{eq12}
-\beta C_1 < \log(h_\beta(x)) < \beta C_1,\,\, |\log (h_\beta(x))-\log(h_\beta(y))|<\beta C_1 d_\theta(x,y)
\end{equation}
and
\begin{equation}\label{eq13}
e^{-\beta n C_2} <\mu_{\beta}([x_1...x_n])<e^{\beta n C_2}.
\end{equation}
For a proof of (\ref{eq12}), see \cite{CLT} p. 1404 or \cite{M} Lemma 28. For a proof of $(\ref{eq13})$, see \cite{PP}, proof of the corollary 3.2.1., observing  that $\mu_\beta$ is the equilibrium measure of $\beta f+ \log(h_\beta) - \log(h_\beta \circ \sigma) - P(\beta f)$, and use $(\ref{eq12})$.  

If the hypothesis of the theorem is not satisfied, from (\ref{eq13}), applying a Cantor's diagonal argument, we obtain the existence of a sequence $\beta_{j}$ for which all limits $\lim_{\beta_j \to +\infty}\frac{1}{\beta_j}\log(\mu_{\beta_j}(k))$ exist. A similar result is valid for this subsequence, with all $\beta$ replaced by $\beta_j$ in the statement of the theorem. 

\bigskip
   
4. If $X$ is a subshift of finite type defined from an aperiodic matrix, the theorem remains valid except by the equation $(\ref{eq7})$ which must be replaced by the following equation
\begin{equation}\label{eq16}
I(x) = \inf_{y \in X_{max}(f)} \left[\lim_{\epsilon \to 0^+}\inf_{n\geq 1}\inf_{\substack{d(x,z)<\epsilon\\ \sigma^n(z)=y}} R_{+}^n(z)+I(y)\right].
\end{equation}
We will prove the equation (\ref{eq16}) after the proof of the Theorem \ref{teorema2}. 

\vspace{0.5cm}

	In the section 3. we will apply the above theorem studying the L.D.P. for the equilibrium measures of a class of Lipschitz functions $f:\{0,1\}^\mathbb{N}\to\mathbb{R}$ satisfying\footnote{we use the following notations
		\[ 0^2=00, \, 0^3=000,...,\,0^\infty = (0000...), \,\, 1^2=11,\,1^3=111,..., \,1^\infty=(1111...). \]}
\[f|_{[01]}=b,\,\,\,f|_{[10]}=d,\,\,\,\,f(0^{\infty})=f(1^{\infty})=0,\,\,\,  f|_{[0^n1]} =a_n,\,\,\,\,f|_{[1^n0]} =c_n,\,\,n\geq 2             \] 
where $b,d, a_n, c_n<0$.   
The deviation function $I$ is presented and, in the case $\sum_{j\geq2} a_j < b+d+\sum_{j\geq 2} c_j$, this function differs from the one that appears in \cite{BLT} (see also \cite{BLM}).

\section{Proof of theorem $\ref{teorema2}$}

The following general result  is very helpful and proves item 1. of Theorem \ref{teorema2}.

\begin{lemma}\label{teorema3}
Let $\eta_{\beta}$ be a sequence of probabilities on $X$. Suppose  that for any cylinder $k\subset X$ there exists the limit $\displaystyle{\lim_{\beta\to+\infty} \frac{1}{\beta}\log(\eta_{\beta}(k))}$.
Then, denoting $x=(x_{1}x_{2}x_3...)$, \newline
1. The function $I:X\to [0,+\infty]$,
\[I(x):= -\lim_{n\to+\infty}\lim_{\beta\to+\infty}\frac{1}{\beta}\log(\eta_{\beta}([x_{1}...x_{n}]))\]
is lower semi-continuous.\newline
2. For any cylinder $k \subset X$, \[\lim_{\beta\to+\infty} \frac{1}{\beta}\log(\eta_{\beta}(k)) = -\inf_{x\in k} I(x).\]
\end{lemma}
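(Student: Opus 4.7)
The plan is first to check that $I$ is well-defined, then to verify lower semi-continuity, and finally to obtain the variational identity in item 2 via two matching inequalities.

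For well-definedness, observe that $[x_1\ldots x_{n+1}] \subset [x_1\ldots x_n]$, so the number $a_n(x) := \lim_{\beta\to+\infty} \frac{1}{\beta}\log\eta_\beta([x_1\ldots x_n])$, which lies in $[-\infty,0]$ because $\eta_\beta \leq 1$, is non-increasing in $n$; hence its limit as $n\to+\infty$ exists in $[-\infty,0]$ and $I(x) \in [0,+\infty]$. Lower semi-continuity is then essentially automatic: if $x^{(k)} \to x$, then for each fixed $n$, eventually $x^{(k)} \in [x_1\ldots x_n]$, so $[x^{(k)}_1\ldots x^{(k)}_n] = [x_1\ldots x_n]$ and therefore $I(x^{(k)}) \geq -a_n(x)$ for all large $k$; letting $k\to+\infty$ and then $n\to+\infty$ yields $\liminf_k I(x^{(k)}) \geq I(x)$.

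For item 2, write $L(k) := \lim_{\beta\to+\infty} \frac{1}{\beta}\log\eta_\beta(k)$. The easy direction $L(k) \geq -\inf_{x\in k} I(x)$ follows because any $x \in k = [a_1\ldots a_m]$ satisfies $[x_1\ldots x_n] \subset k$ for $n\geq m$, giving $a_n(x) \leq L(k)$, so $I(x) \geq -L(k)$; taking the infimum over $x\in k$ gives the inequality.

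The reverse inequality is the main obstacle: I must exhibit a point $x \in k$ with $I(x) \leq -L(k)$. The key identity is $L(k) = \max_{1\leq j \leq d} L([a_1\ldots a_m j])$, which I would derive from the sandwich
\[ \max_j \eta_\beta([a_1\ldots a_m j]) \,\leq\, \eta_\beta(k) \,\leq\, d \cdot \max_j \eta_\beta([a_1\ldots a_m j]) \]
by applying $\frac{1}{\beta}\log$, using that each of the $d$ cylinder limits is assumed to exist. Iterating this identity lets me pick letters $x_{m+1}, x_{m+2}, \ldots$ successively so that $L([x_1\ldots x_n]) = L(k)$ at every step, producing $x \in k$ with $a_n(x) \equiv L(k)$ for $n\geq m$ and therefore $I(x) = -L(k)$. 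The delicate point is precisely the interchange of a finite maximum with the limit in $\beta$: it works only because the alphabet is finite and each inner limit is known to exist, so I would state and justify this refinement identity carefully before running the inductive construction that builds $x$.
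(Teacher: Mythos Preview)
Your proposal is correct and follows essentially the same route as the paper: well-definedness via monotonicity of $n\mapsto a_n(x)$, lower semi-continuity from the fact that $I(z)\ge -a_n(x)$ whenever $z\in[x_1\ldots x_n]$, the easy inequality $L(k)\ge -\inf_{x\in k}I(x)$ by inclusion of cylinders, and the reverse inequality by iterating the refinement identity $L([x_1\ldots x_n])=\max_j L([x_1\ldots x_n j])$ to build a point $x\in k$ with $I(x)=-L(k)$. The only cosmetic difference is that you phrase lower semi-continuity sequentially while the paper writes it as $I(x)\le\liminf_n\inf_{z\in[x_1\ldots x_n]}I(z)$; these are equivalent since the cylinders form a metric neighborhood basis.
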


\noindent
\textbf{Remark:} In \cite{BMP} this result is generalized for Gibbs measures when considering a countable mixing subshift with the BIP property. 

\begin{proof} The function $I$ is well defined because $\psi_x(n):=\lim_{\beta\to+\infty}\frac{1}{\beta}\log(\eta_{\beta}([x_{1}...x_{n}]))$ exists and it is not increasing with $n$. The function $I:X\to [0,+\infty]$,  $I(x)=-\lim_{n\to+\infty}\psi_x(n)$ assume the value $+\infty$ if $\lim_{n\to+\infty} \psi_x(n)=-\infty$. Furthermore, denoting $z^n=(z^n_1z^n_2z^n_3...) \in X$,
	\begin{align*}
	I(x) &=\liminf_{n\to+\infty}[-\lim_{\beta\to+\infty}\frac{1}{\beta}\log(\eta_{\beta}([x_{1}...x_{n}]))]\\
	     &=\liminf_{n\to+\infty}\inf_{z^n \in [x_1...x_n]}[-\lim_{m\to+\infty}\lim_{\beta\to+\infty}\frac{1}{\beta}\log(\eta_{\beta}([x_{1}...x_{n}]))]\\
	     &\leq \liminf_{n\to+\infty}\inf_{z^n \in [x_1...x_n]}[-\lim_{m\to+\infty}\lim_{\beta\to+\infty}\frac{1}{\beta}\log(\eta_{\beta}([z^n_{1}...z^n_{m}]))]\\
	     &=\liminf_{n\to+\infty}\inf_{z^n \in [x_1...x_n]} I(z^n),	     
	     \end{align*}
therefore $I$ is lower semi-continuous.

Given a cylinder $k=[x_1...x_n]$, for any $z=(z_1z_2z_3...) \in k$ we have
\[\lim_{\beta\to+\infty} \frac{1}{\beta}\log(\eta_{\beta}(k)) \geq \lim_{m\to+\infty} \lim_{\beta\to+\infty} \frac{1}{\beta}\log(\eta_{\beta}([z_1...z_m]))=-I(z).  \] 
Thus, we get \[\lim_{\beta\to+\infty} \frac{1}{\beta}\log(\eta_{\beta}(k)) \geq \sup_{z\in k} -I(z)= -\inf_{z\in k} I(z).\]
On the other hand, as 
\begin{align*}
\lim_{\beta\to+\infty} \frac{1}{\beta}\log(\eta_{\beta}([x_1...x_n])) &=\lim_{\beta\to+\infty} \frac{1}{\beta}\log(\sum_{j=1}^d\eta_{\beta}([x_1...x_nj]))\\
&=\max_{j\in\{1,...,d\}}\lim_{\beta\to+\infty} \frac{1}{\beta}\log(\eta_{\beta}([x_1...x_nj])) , 
\end{align*} 
there exists $y=(y_1y_2y_3...) \in X$ satisfying
\begin{align*}
\lim_{\beta\to+\infty} \frac{1}{\beta}\log(\eta_{\beta}([x_1...x_n]))&=\lim_{\beta\to+\infty} \frac{1}{\beta}\log(\eta_{\beta}([x_1...x_ny_1]))\\
&=\lim_{\beta\to+\infty} \frac{1}{\beta}\log(\eta_{\beta}([x_1...x_ny_1y_2]))=.... 
\end{align*}
Therefore, we finally get
\[\lim_{\beta\to+\infty} \frac{1}{\beta}\log(\eta_{\beta}(k))=-I(x_1...x_ny) \leq  \sup_{z\in k} -I(z)= -\inf_{z\in k} I(z).\]
\end{proof}

\begin{lemma} Under the hypotheses of the Theorem \ref{teorema2}
the deviation function $I$ in (\ref{eq6}) satisfies $I(x)\geq I(\sigma(x))\,\,\forall x\in X$. Particularly, the function \begin{equation}\label{eq8}
I_0:X\to[0,+\infty],\,\,\, I_{0}(x):=\lim_{n\to+\infty} I(\sigma^{n}(x))
\end{equation}
 is constant on each orbit $\Omega_x=\{\sigma^n(x)\,|\, n\in\{0,1,2,3,...\}\},\,\,x\in X$. 
\end{lemma}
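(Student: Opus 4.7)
The plan has two parts, and both are short.

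First, I would deduce the monotonicity $I(x)\geq I(\sigma(x))$ directly from the $\sigma$-invariance of $\mu_\beta$, which was noted in the introduction. Writing $\sigma(x)=(x_2x_3\dots)$, the cylinder around $\sigma(x)$ of length $n-1$ is $[x_2\dots x_n]$, and $\sigma$-invariance gives
\[
\mu_\beta([x_2\dots x_n])=\mu_\beta(\sigma^{-1}[x_2\dots x_n])=\sum_{a=1}^{d}\mu_\beta([a\,x_2\dots x_n])\geq \mu_\beta([x_1x_2\dots x_n]).
\]
Applying $\frac{1}{\beta}\log$, then taking $\beta\to+\infty$ (permitted by the hypothesis of Theorem \ref{teorema2} that the cylinder-limit exists), and finally letting $n\to+\infty$ yields $-I(\sigma(x))\geq -I(x)$, i.e.\ $I(\sigma(x))\leq I(x)$.

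Second, once monotonicity is available, the sequence $(I(\sigma^n(x)))_{n\geq 0}$ is non-increasing in $[0,+\infty]$, so $I_0(x)=\lim_{n\to+\infty}I(\sigma^n(x))$ exists (possibly $+\infty$). The $\sigma$-invariance of $I_0$ is then a bare re-indexing:
\[
I_0(\sigma(x))=\lim_{n\to+\infty}I(\sigma^{n+1}(x))=\lim_{m\to+\infty}I(\sigma^{m}(x))=I_0(x),
\]
which says exactly that $I_0$ is constant along the forward orbit $\Omega_x$.

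I do not expect any genuine obstacle here: the whole argument reduces to the conjunction of $\sigma$-invariance of $\mu_\beta$ with monotone inclusion of cylinders, plus a trivial shift-of-index in the limit. The only point that deserves a sentence of care is the legitimacy of taking the $\beta\to+\infty$ limit on each side of the inequality separately — this is precisely what the standing hypothesis of Theorem \ref{teorema2} supplies for every cylinder.
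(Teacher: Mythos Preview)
Your proof is correct and follows essentially the same route as the paper: the monotonicity $I(x)\geq I(\sigma(x))$ is obtained from $\sigma$-invariance of $\mu_\beta$ via the same cylinder inequality $\mu_\beta([x_1\dots x_n])\leq \mu_\beta([x_2\dots x_n])$, followed by the two limits. The paper leaves the constancy of $I_0$ along orbits as an immediate consequence of this monotonicity without writing out the re-indexing, which you spell out explicitly.
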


\begin{proof}
Denoting $x=(x_1x_2x_3...)$,  as $\mu_\beta$ is $\sigma-$invariant, for $n\geq 2$,
\[ \frac{1}{\beta}\log(\mu_{\beta}([x_{1}...x_{n}])) \leq  \frac{1}{\beta}\log(\sum_{j=1}^d\mu_{\beta}([jx_{2}...x_{n}]))= \frac{1}{\beta}\log(\mu_{\beta}([x_{2}...x_{n}]))
.\]
Then
\begin{align*}
I(x) &= -\lim_{n\to+\infty}\lim_{\beta\to+\infty}\frac{1}{\beta}\log(\mu_{\beta}([x_{1}...x_{n}])) \\
&\geq -\lim_{n\to+\infty}\lim_{\beta\to+\infty}\frac{1}{\beta}\log(\mu_{\beta}([x_{2}...x_{n}]))=I(\sigma(x)).
\end{align*}
\end{proof}

We write $y \in \omega(x)$, $x,y \in X$, if there exists an increasing sequence $n_i \to +\infty$ such that $\sigma^{n_i}(x)\to y$.

\begin{corollary}\label{omega}
Under the hypotheses  of Theorem \ref{teorema2}, let $I$ be the deviation function defined in (\ref{eq6}) and $I_0$ be the function defined in (\ref{eq8}). \newline
1. If $y\in \omega(x)$, then $I(y) \leq I_{0}(x)\leq I(x)$,\newline
2.  $I$ is constant on each periodic orbit, \newline
3. If $x\in \omega(y)$ and $y\in\omega(x)$, then $I_{0}(x)=I(x)=I(y)=I_{0}(y)$.
\end{corollary}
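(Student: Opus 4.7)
The plan is to combine two ingredients already established in the text: the monotonicity $I(x) \geq I(\sigma(x))$ from the preceding lemma, and the lower semi-continuity of $I$ proved in Lemma \ref{teorema3}. No new machinery is needed; the three items reduce to careful chaining of inequalities.

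For item 1, the bound $I_0(x) \leq I(x)$ is immediate: monotonicity makes the sequence $(I(\sigma^n(x)))_{n\geq 0}$ non-increasing, so its limit $I_0(x)$ is at most $I(\sigma^0(x)) = I(x)$. For the bound $I(y) \leq I_0(x)$ when $y\in\omega(x)$, I would pick a sequence $n_i\to+\infty$ with $\sigma^{n_i}(x)\to y$ and invoke lower semi-continuity: $I(y) \leq \liminf_i I(\sigma^{n_i}(x))$. Since the full sequence $I(\sigma^n(x))$ converges (it is monotone) to $I_0(x)$, every subsequential $\liminf$ equals $I_0(x)$, giving $I(y) \leq I_0(x)$. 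This is the only step that requires a small argument; everything else is bookkeeping.

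Item 2 is the special case where one iterates monotonicity around a periodic orbit: if $\sigma^p(x)=x$, then $I(x)\geq I(\sigma(x))\geq \cdots \geq I(\sigma^p(x)) = I(x)$, forcing equality throughout the chain and hence constancy of $I$ on the orbit.

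For item 3, I apply item 1 twice. From $y\in\omega(x)$ I get $I(y)\leq I_0(x)\leq I(x)$, and from $x\in\omega(y)$ I get $I(x)\leq I_0(y)\leq I(y)$. Concatenating yields $I(x)\leq I_0(y)\leq I(y)\leq I_0(x)\leq I(x)$, so all four quantities coincide. I do not expect any genuine obstacle: the use of lower semi-continuity in item 1 is the only non-trivial moment, and the rest is inequality tracking.
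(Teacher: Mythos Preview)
Your proposal is correct and follows essentially the same approach as the paper: both use the monotonicity $I\geq I\circ\sigma$ from the preceding lemma together with the lower semi-continuity of $I$ to obtain item~1, chain the monotonicity around a periodic orbit for item~2, and apply item~1 twice for item~3. The paper's proof is terser but the logic is identical.
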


\begin{proof}
In order to prove 1. we suppose $\sigma^{n_{j}}(x) \to y$. From the above lemma and using the lower semi-continuity of $I$ we get
\[ I(x)\geq I_{0}(x) = \lim_{n_{j}\to+\infty}I(\sigma^{n_{j}}(x))\geq I(y).\]
The proof of 2. consists in observing that for a periodic orbit $\{x,...,\sigma^{n}(x)\}$ we have:
\[I(x) \geq I(\sigma(x)) \geq ... \geq I(\sigma^n(x)) \geq I(x).\]
Analogously,  to prove 3. we observe that from 1. we have
\[I(x) \geq I_{0}(x) \geq I(y) \geq I_{0}(y) \geq I(x).\]
\end{proof}

\bigskip

\noindent
\textbf{Proof of Theorem \ref{teorema2}:}\newline

\noindent 
\textit{proof of 1.} \newline
It is a consequence of Lemma \ref{teorema3}.

\bigskip

\noindent
\textit{proof of 2.}\newline
The existence of the limit $R_{-}$ is a consequence of corollary 48 in \cite{M}. Furthermore, following the Proposition 47 in \cite{M},  for $x=(x_{0}x_{1}x_2...)$, we get
\begin{align*}
R_{+}(x) &= \lim_{\beta,n\to+\infty} \frac{1}{\beta}\log \frac{\mu_{\beta}[x_{1}...x_{n}]}{\mu_{\beta}[x_{0}...x_{n}]}\\ 
&= \lim_{n\to+\infty}\left(\lim_{\beta\to+\infty}\frac{1}{\beta}\log \mu_{\beta}[x_{1}...x_{n}] - \lim_{\beta\to+\infty}\frac{1}{\beta}\log \mu_{\beta}[x_{0}...x_{n}]\right).
\end{align*}
Therefore, using Lemma \ref{teorema3}, we have
\[I(x) = R_{+}(x) + I(\sigma(x)).\]
It follows that for each $n$,
\begin{equation}\label{IandR}
I(x) = R_{+}^{n}(x) + I(\sigma^{n}(x)),
\end{equation}
and, taking $n\to+\infty$,
\[I(x) = R_{+}^{\infty}(x) + I_{0}(x)\]
(see also  \cite{BMP}).
\bigskip

\noindent
\textit{proof of 3.}  \newline
Denoting $x=(x_{1}x_{2}x_3...)$, we want to show that 
$$I(x) = \inf_{y\in X_{max}(f)} \liminf_{n\to+\infty}\left(R_{+}^{n}(x_{1}...x_{n}y) + I(y)\right).$$
For a fixed $y\in X$ we have from (\ref{IandR}) that
\[I(x_{1}...x_{n}y) = R_{+}^{n}(x_{1}...x_{n}y) + I(y).\]
As $I$ is lower semi-continuous,
\begin{equation}\label{eq11}
I(x) \leq \liminf_{n\to+\infty} \left(R_{+}^{n}(x_{1}...x_{n}y) + I(y)\right).
\end{equation}
Then, (considering an infimum on the right side)
\[I(x) \leq \inf_{y\in X_{max}(f)} \liminf_{n\to+\infty}\left(R_{+}^{n}(x_{1}...x_{n}y) + I(y)\right).\]

Now, we will prove the reverse inequality.

If $R_{+}^{\infty}(x)=+\infty$, then $I(x)= R_{+}^{\infty}(x) +I_0(x)=+\infty$ and, as $R_{+}$ is a Lipschitz function, for any $y \in X_{max}(f)$, 
\[\liminf_{n\to+\infty}R_{+}^{n}(x_{1}...x_{n}y) =+\infty.\]
So  the main equality (\ref{eq7}) holds. 

If $R_{+}^{\infty}(x)<+\infty$, there exists at least one point $y \in X_{max}(f)$ such that is an accumulation point of the sequence $\{\sigma^{n}(x)\}_{n=0,1,...}$ (see \cite{LMST} or Lemma 42 and Cor. 43 in \cite{M} or \cite{BMP}). We write $y=(y_{1}y_{2}y_3...).$ \newline
It follows that for each $j\in \mathbb{N}$ there exists some $m_j>j$ such that 
\[x = (x_{1}...x_{m_j}\, y_{1}...y_{j}\,x_{m_{j}+j+1}x_{m_{j}+j+2}...).\]
Then 
\[I(x) = (R_{+}^{m_j}(x_{1}...x_{m_j}y_{1}...y_{j}x_{m_{j}+j+1}...) + I(y_{1}...y_{j}x_{m_{j}+j+1}...).\]
When $j\to+\infty$, using the fact that $I$ is lower semi-continuous and $R_{+}$ is Lipschitz, we get
\begin{align*}
I(x) &\geq \liminf_{j\to+\infty}(R_{+}^{m_j}(x_{1}...x_{m_j}y_{1}...y_{j}x_{m_{j}+j+1}...)) + I(y)\\ 
&= \liminf_{j\to+\infty}(R_{+}^{m_j}(x_{1}...x_{m_j}y)) + I(y)\\
&\geq \liminf_{n\to+\infty}(R_{+}^{n}(x_{1}...x_{n}y)) + I(y).
\end{align*}
Therefore,
\[I(x) \geq \inf_{y\in X_{max}(f)} \liminf_{n\to+\infty}\left(R_{+}^{n}(x_{1}...x_{n}y) + I(y)\right),\]
proving the reverse inequality. This concludes the proof of (\ref{eq7}). 

\bigskip
As we see above, if $R_{+}^{\infty}(x)<+\infty$, there exists at least one point $y \in X_{max}(f)$ which is an accumulation point of $\{\sigma^n x\}_{n=1,2,...}$. 
For any such $y$, from corollary \ref{omega}, we have
\[I(x) = R_{+}^{\infty}(x)+I_0(x) \geq R_{+}^{\infty}(x)+I(y).\]
On the other hand, following the notations above 
\begin{align*}
I(x) &\leq \liminf_{j\to+\infty} I(x_1...x_{m_j}y) \\
&= \liminf_{j\to+\infty}R_{+}^{m_j}(x_1...x_{m_j}y)+I(y)=\lim_{m_j\to+\infty}R_{+}^{m_j}(x)+I(y)=R_{+}^{\infty}(x)+I(y),
\end{align*}
where we use that $I$ is lower semi-continuous, $R_{+}^{n}(x)$ is increasing with $n$, $x_{m_j+1}...x_{m_j+j}=y_1...y_j$ and that $R_{+}$ is Lipschitz.
This concludes the proof of the equation
\[I(x) = R_{+}^{\infty}(x) + I(y).\]

\bigskip
\noindent
\textit{Proof of 4. and 5. } \newline
We denote by $\nu_{\beta}$ the eigenmeasure of the Ruelle Operator $L_{\beta f}$. The probabilities $\nu_{\beta}$ and  $\mu_{\beta}$ satisfy $h_\beta \,d\nu_{\beta}=d\mu_{\beta}$, that is,
\[\int w\cdot h_\beta \,d\nu_{\beta} = \int w\, d\mu_{\beta}\,\,\, \forall w \in C(X).\]
Consequently, given a cylinder $k \subset X$, from (\ref{eq12}) and (\ref{eq13}), there exists a constant $C_k$ such that, for $\beta$ sufficiently large, $-\beta C_k < \log(\nu_\beta(k)) < \beta C_k$.

We suppose initially the existence of the uniform limit $$V_{1}:=\lim_{\beta_j \to+\infty}\frac{1}{\beta_j}\log(h_{\beta_j}).$$ For a cylinder $k_{0}$ and an accumulation point $a$ of $\frac{1}{\beta_j}\log(\nu_{\beta_j}(k_{0}))$, there exists a subsequence  $\beta_{j_i}$ such that
\[\lim_{\beta_{j_i}\to+\infty}\frac{1}{\beta_{j_i}}\log(\nu_{\beta_{j_i}}(k_{0})) = a.\]
Using a Cantor's diagonal argument we can suppose that for any cylinder $k$ there exists the limit of $\frac{1}{\beta_{j_i}}\log(\nu_{\beta_{j_i}}(k))$.
By hypothesis,  $\mu_{\beta}$ satisfies a L.D.P. with deviation function $I$. Fixed any point $z=(x_{1}x_{2}x_3...)$, for each $n$,
\[
\frac{1}{\beta_{j_i}}\log (\nu_{\beta_{j_i}}([x_{1}...x_{n}])+\inf_{[x_{1}...x_{n}]}\frac{1}{\beta_{j_i}}\log(h_{\beta_{j_i}}) \leq 
 \frac{1}{\beta_{j_i}}\log(\mu_{\beta_{j_i}}([x_{1}...x_{n}]))  \]
\[\leq \frac{1}{\beta_{j_i}}\log (\nu_{\beta_{j_i}}([x_{1}...x_{n}])+\sup_{[x_{1}...x_{n}]}\frac{1}{\beta_{j_i}}\log(h_{\beta_{j_i}}).\]
Taking $\beta_{j_i}\to+\infty$, we have
\[
\lim_{\beta_{j_i}\to+\infty}\frac{1}{\beta_{j_i}}\log (\nu_{\beta_{j_i}}([x_{1}...x_{n}])+\inf_{[x_{1}...x_{n}]} V_{1} \leq \lim_{\beta_{j_i}\to+\infty} \frac{1}{\beta_{j_i}}\log(\mu_{\beta_{j_i}}([x_{1}...x_{n}]))\]
\[\leq \lim_{\beta_{j_i}\to+\infty}\frac{1}{\beta_{j_i}}\log (\nu_{\beta_{j_i}}([x_{1}...x_{n}])+\sup_{[x_{1}...x_{n}]} V_{1} .\]
When $n \to+\infty$    (applying Lemma \ref{teorema3}) we have
\[-\lim_{n\to+\infty}\lim_{\beta_{j_i}\to+\infty}\frac{1}{\beta_{j_i}}\log (\nu_{\beta_{j_i}}([x_{1}...x_{n}]) = I(z) + V_{1}(z).\]
Using Lemma $\ref{teorema3}$ again, we conclude that $\nu_{\beta_{j_i}}$ satisfies a L.D.P. with deviation function $I+V_{1}$. Then 
\[a = - \inf_{x\in k_{0}} (I(x) + V_{1}(x)).\]
As $a$ is any possible accumulation point of  $\frac{1}{\beta_j}\log(\nu_{\beta_j}(k_{0}))$ we conclude that 
\[\lim_{\beta_j \to \infty} \frac{1}{\beta_j}\log(\nu_{\beta_j}(k_{0}))=- \inf_{x\in k_{0}} (I(x) + V_{1}(x)).\]

Now we will prove the existence of the limit function $V$. 
Suppose that for subsequences $\beta_{i}$ and $\beta_{j}$ we have 
\[\lim_{\beta_i \to+\infty}\frac{1}{\beta_{i}}\log(h_{\beta_{i}}) = V_{1} \,\,\text{and}\,\, \lim_{\beta_j \to+\infty}\frac{1}{\beta_{j}}\log(h_{\beta_{j}}) = V_{2}.\]
 Applying 2. of the Theorem \ref{teorema2} we obtain $V_{2} -V_2\circ\sigma  = V_1 - V_{1}\circ\sigma$. Therefore, $V_2 = V_1 + C$ for some constant $C$.

Applying the above conclusions on the L.D.P. for the set $X$ (the full space) we get
\[0 = \lim_{\beta_{i}\to+\infty}\frac{1}{\beta_{i}}\log (\nu_{\beta_{i}}(X)) = -\inf_{x\in X}(I(x) + V_{1}(x))\]
and
\[0 = \lim_{\beta_{j}\to+\infty}\frac{1}{\beta_{j}}\log (\nu_{\beta_{j}}(X)) = -\inf_{x\in X}(I(x) + V_{2}(x)).\]
Thus, we have
\[0 = -\inf_{x\in X}(I(x) + V_{2}(x)) = -\inf_{x\in X}(I(x) + V_{1}(x) + C)\]
\[ = -\inf_{x\in X}(I(x) + V_{1}(x)) + C =0+C= C,\]
proving that $V_{2}=V_{1}$. This shows that exists the uniform limit $V=\lim_{\beta \to \infty} \frac{1}{\beta}\log(h_\beta)$, proving 4.

 The previous arguments on the L.D.P. for the measures $\nu_{\beta_j}$ can be applied to the general family of measures $\mu_\beta$, proving 5. $\square$

\bigskip

\noindent
\textbf{Proof of remark 4. and equation (\ref{eq16}):}
If $X$ is a subshift of finite type defined from an aperiodic matrix, the arguments used in the above proof are also valid except by  some estimates in the proof of (\ref{eq7}). 
In this case the equation (\ref{eq7}) can be replaced by the equation (\ref{eq16}), that is,
\[ I(x) = \inf_{y \in X_{max}(f)} \left[\lim_{\epsilon \to 0^+}\inf_{n\geq 1}\inf_{\substack{d(x,z)<\epsilon\\ \sigma^n(z)=y}} R_{+}^n(z)+I(y)\right].\]
Indeed, from (\ref{IandR}), 
\[ \inf_{\substack{d(x,z)<\epsilon\\ \sigma^n(z)=y}} R_{+}^n(z)+I(y) = \inf_{\substack{d(x,z)<\epsilon\\ \sigma^n(z)=y}} I(z).\]
As $I$ is lower semi-continuous, for any $x,y \in X$, we have
\[I(x)\leq \left[\lim_{\epsilon \to 0^+}\inf_{n\geq 1}\inf_{\substack{d(x,z)<\epsilon\\ \sigma^n(z)=y}} I(z)\right],\]
and, then
\[ I(x) \leq \inf_{y \in X_{max}(f)} \left[\lim_{\epsilon \to 0^+}\inf_{n\geq 1}\inf_{\substack{d(x,z)<\epsilon\\ \sigma^n(z)=y}} R_{+}^n(z)+I(y)\right].\]

In order to prove the reverse inequality we remark that if $R_+^\infty(x)=+\infty$, then $I(x)=R_+^\infty(x) +I_0(x)=+\infty$ and using the above inequality, the equation (\ref{eq16}) corresponds to the equality $+\infty=+\infty$.
Suppose $R_+^\infty(x)<\infty$ and consider $\eta >0$. As $R_+$ is Lipschitz, there exists a constant $C>0$ such that $|R_+(a)-R_+(b)| \leq C d_{\theta}(a,b),\,\,\forall a,b \in X$. Let $j_0$ be such that  $\frac{C \theta^{j_0}}{1-\theta} < \eta$.
Take $y\in X_{max}(f)\cap \omega(x)$. Then, from corollary \ref{omega},
\[I(x) = R_{+}^{\infty}(x)+I_0(x) \geq R_{+}^{\infty}(x)+I(y).\] 
 Given $\epsilon >0$, let $j>j_0$ be such that $\theta^j < \epsilon$. For this $j$ there exists $m_j>j$ such that
\[x = (x_{1}...x_{m_j}\, y_{1}...y_{j}\,x_{m_{j}+j+1}x_{m_{j}+j+2}...).\]
Let $z_\epsilon = (x_1...x_{m_j}y)$. 
Then,
\begin{align*}
I(x) &\geq  R_+^{\infty}(x) + I(y) \geq R_+^{m_j}(x) +I(y) \\
&\geq R_+^{m_j}(x_1...x_{m_j}y) +I(y) -\eta =R_+^{m_j}(z_\epsilon) +I(y) -\eta .
\end{align*}
Therefore, $d(x,z_\epsilon)<\epsilon$, $\sigma^{m_j}(z_\epsilon)=y$ and $I(x)\geq (R_{+}^{m_j}(z_{\epsilon}) + I(y)) -\eta$. This construction shows that
\[ I(x) \geq \inf_{y \in X_{max}(f)} \left[\lim_{\epsilon \to 0}\inf_{n\geq 1}\inf_{\substack{d(x,z)<\epsilon\\ \sigma^n(z)=y}} R_{+}^n(z)+I(y)\right] -\eta.\]
As $\eta$ can be arbitrarily small, we conclude the proof. $\square$

\section{Application for an explicit example}

Now we use the results described above in order to complete the study of Large Deviations for the equilibrium measures of a family of functions previously studied in \cite{BLM}.
\begin{definition} 
We write  $f\in \mathbf{W}$ if $f:\{0,1\}^\mathbb{N} \to \mathbb{R}$ is a Lipschitz function and there exist negative numbers $b,d,\{c_n\}_{n\geq 2},\{a_n\}_{n\geq 2}$, such that, for $n\geq 2$,
\begin{equation}\label{eq14}
f|_{[01]}=b,\,\,\,f|_{[10]}=d,\,\,\,\,f(0^{\infty})=f(1^{\infty})=0,\,\,\,  f|_{[0^n1]} =a_n,\,\,\,\,f|_{[1^n0]} =c_n.   
\end{equation}
\end{definition}

Any function $f \in \mathbf{W}$ belongs to the class of potentials defined by P. Walters \cite{Walters} where $0=a=c$, $b=b_1=b_2=...$ and $d=d_1=d_2=...$\,. We remark that $\sum_{i\geq 2}a_i >-\infty$ and $\sum_{i\geq 2} c_i >-\infty$, because  $f$ is Lipschitz and $f(0^{\infty})=f(1^{\infty})=0$.

In the analysis of the zero temperature case for these functions, the exponential limit of $P(\beta f)$ plays an important role.
\begin{lemma}\label{pressure} If $f\in \mathbf{W}$ satisfies (\ref{eq14}), then
	\[\lim_{\beta\to +\infty}\frac{1}{\beta}\log(P(\beta f)) =A\] where
	\[A= \left\{ \begin{array}{ll}
b+d +\sum_{j=1}^{\infty}c_{1+j}, & \text{when} \ \ \sum_{j=1}^{\infty}a_{1+j} \leq b+d +\sum_{j=1}^{\infty}c_{1+j},\\
b+d +\sum_{j=1}^{\infty}a_{1+j}, & \text{when} \ \ \sum_{j=1}^{\infty}c_{1+j} \leq b+d +\sum_{j=1}^{\infty}a_{1+j},\\
\frac{b+d}{2} +\sum_{j=1}^{\infty}\frac{a_{1+j}}{2} +\sum_{j=1}^{\infty}\frac{c_{1+j}}{2}, & \text{in the other cases}.
	\end{array} \right.     \]
\end{lemma}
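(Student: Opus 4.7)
The plan is to bound $P(\beta f)$ asymptotically from above and below. Since $f \le 0$ with $f(0^\infty) = f(1^\infty) = 0$, one has $m(f) = 0$ and $P(\beta f) \to 0$ as $\beta \to +\infty$, so $\log\lambda_\beta \sim \lambda_\beta - 1$ where $\lambda_\beta = e^{P(\beta f)}$; consequently the claim is equivalent to $\lim_\beta \beta^{-1}\log(\lambda_\beta - 1) = A$. I aim to prove matching lower and upper bounds of the form $e^{\beta A(1+o(1))}$, writing $\tilde a := \sum_{n \ge 2} a_n$ and $\tilde c := \sum_{n \ge 2} c_n$ for convenience.

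For the lower bound I would use the variational principle $P(\beta f) = \sup_\mu[\beta\int f\, d\mu + h(\mu)]$, tested against two-state Markov measures with transition probabilities $q_0$ (from $0$ to $1$) and $q_1$ (from $1$ to $0$). Using the piecewise-constant form of $f$, one has $\int f\, d\mu = (q_0 q_1/(q_0+q_1))[T_0(q_0) + T_1(q_1)]$ where $T_0(q_0) = \sum_{n\ge 1} a_n(1-q_0)^{n-1}$ (with the convention $a_1 := b$) and analogously $T_1(q_1)$ (with $c_1 := d$); the entropy is $h(\mu) = (q_1/(q_0+q_1)) H(q_0) + (q_0/(q_0+q_1)) H(q_1)$ with $H(q) = -q\log q - (1-q)\log(1-q)$. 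Substituting scaled test values $q_0 = e^{-\beta \alpha}$, $q_1 = e^{-\beta\gamma}$ (and also allowing $q_0 \to 1$) and optimizing over $\alpha, \gamma$, three candidate regimes emerge: the symmetric regime $q_0, q_1 \to 0$ at the same rate (where $T_0 \to b + \tilde a$, $T_1 \to d + \tilde c$) yields rate $(b+d+\tilde a+\tilde c)/2$; the asymmetric regime $q_0 \to 1$, $q_1 \to 0$ (where $T_0 \to b$, $T_1 \to d + \tilde c$) yields rate $b+d+\tilde c$; and its mirror image yields $b+d+\tilde a$. The supremum of these three rates equals $A$ according to the case structure in the statement: $b+d+\tilde c \ge (b+d+\tilde a+\tilde c)/2$ iff $\tilde a \le b+d+\tilde c$ (Case 1), and symmetrically for the remaining cases.

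For the matching upper bound I would appeal to the Ruelle eigenequation $L_{\beta f}h_\beta = \lambda_\beta h_\beta$, evaluated at $x = 0^\infty$ and $x = 1^\infty$ (where $f$ vanishes): this yields $(\lambda_\beta - 1)h_\beta(0^\infty) = e^{\beta d}h_\beta(10^\infty)$ and its counterpart at $1^\infty$. Iterating the eigenequation along the preimage sequences $(1^n 0^\infty)_{n\ge 1}$ and $(0^n 1^\infty)_{n\ge 1}$, using $f|_{[1^n 0]} = c_n$, $f|_{[0^n 1]} = a_n$ for $n\ge 2$ together with convergence of $\tilde a, \tilde c$, reduces the eigenvalue problem at leading exponential order to a finite effective $2 \times 2$ transfer matrix on $\{0^\infty, 1^\infty\}$, with off-diagonal weights of order $e^{\beta(d+\tilde c)}$ and $e^{\beta(b+\tilde a)}$ and diagonal corrections $1 + O(e^{\beta(b+d+\tilde a)} + e^{\beta(b+d+\tilde c)})$. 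Its leading eigenvalue, computed as half-trace plus square-root of discriminant, reproduces the three regimes and matches the lower bound; in particular the $1/2$ factor of Case 3 appears as the characteristic square-root from an off-diagonally coupled $2 \times 2$ perturbation of the identity. The main obstacle is rigorously controlling the remainders in the iterated eigenequation uniformly in $\beta$, in particular verifying that $h_\beta$ concentrates (in the exponential/logarithmic sense) on neighborhoods of the two fixed points so that the reduction to a $2 \times 2$ effective system is valid.
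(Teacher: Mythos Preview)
The paper does not supply an argument here; it simply cites Proposition~12 of \cite{BLM}, which in turn rests on Walters' explicit formulas for this class. So your proposal is not merely an alternative route but an attempt to furnish a proof the paper omits.

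Your lower bound via the variational principle against two-state Markov measures is sound: testing the three regimes $(q_0,q_1)\to(1,0)$, $(0,1)$, $(0,0)$ at matched exponential rates produces $b+d+\tilde c$, $b+d+\tilde a$, and $(b+d+\tilde a+\tilde c)/2$ respectively, and the maximum of these three is exactly $A$ in each case of the statement.

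For the upper bound, your worry about ``rigorously controlling the remainders'' and needing $h_\beta$ to ``concentrate'' is more pessimistic than the situation warrants. The decisive structural fact for $f\in\mathbf W$---exploited in \cite{Walters} and \cite{BLM}, and recorded in this paper immediately after Lemma~\ref{Vlimit}---is that the eigenfunction is \emph{exactly} constant on each run-length cylinder $[0^q1]$ and $[1^q0]$. Consequently the cross term you encounter when iterating the eigenequation along $(1^n0^\infty)_n$, namely $h_\beta(01^{n-1}0^\infty)$, equals the single constant $H_\beta|_{[01]}$ for every $n\ge2$, and your recursion closes without error terms. One obtains an explicit scalar characteristic equation for $\lambda_\beta$ (rather than an approximate $2\times2$ system), and the asymptotic analysis of that equation---which is what \cite{BLM} carries out---gives the matching upper bound directly. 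So your heuristic is correct in spirit, but you should replace the soft concentration argument by the hard input that $h_\beta$ is piecewise constant on these cylinders; once you use this, the upper bound becomes a computation rather than an estimate.
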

\begin{proof}
	See Prop. 12 in \cite{BLM}.
\end{proof}

Given $f \in \mathbf{W}$ satisfying (\ref{eq14}) and $\beta >0$, in order to simplify the computations, we will consider the function $H_\beta(x)=\frac{h_{\beta}(x)}{h_\beta(0^\infty)}$. This normalization of the eigenfunction was used in \cite{BLM}. Observe that  $H_\beta(0^{\infty}) =1$ and  $\log(H_\beta)-\log(H_\beta\circ\sigma) = \log(h_\beta)-\log(h_\beta\circ\sigma)$. Therefore
\[g_\beta= \beta f+ \log(h_\beta)-\log(h_\beta\circ\sigma) -P(\beta f) = \beta f+ \log(H_\beta)-\log(H_\beta\circ\sigma) -P(\beta f).      \] 
Following \cite{Walters} (see Theo. 3.1 and page 1341), we obtain
\begin{align*} 
&H_{\beta}(0^{\infty}) = 1\\
&H_{\beta}(1^{\infty}) = \frac{e^{\beta b}}{e^{P(\beta f)}}\left(1+\sum_{j=1}^{\infty}e^{\beta(a_{2}+...+a_{1+j}) - jP(\beta f)}\right) \\
&H_{\beta}|_{[0^{q}1]} =  \frac{(e^{P(\beta f)}-1)}{e^{P(\beta f)}}\left(1+\sum_{j=1}^{\infty}e^{\beta(a_{q+1}+...+a_{q+j}) - jP(\beta f)}\right), \,\, q\geq 1\\
&H_{\beta}|_{[1^{q}0]} =  \frac{H_{\beta}(1^{\infty})(e^{P(\beta f)}-1)}{e^{P(\beta f)}}\left(1+\sum_{j=1}^{\infty}e^{\beta(c_{q+1}+...+c_{q+j}) - jP(\beta f)}\right),\,\, q\geq 1.
\end{align*}

The next lemma can be used in order to get the function $R_+^\infty$ that appears in the formulation of the deviation function in Theorem \ref{teorema2}.
 
\begin{lemma}\label{Vlimit} Under the above notations, for $f\in \mathbf{W}$ satisfying (\ref{eq14}), there exists the uniform limit $U=\lim_{\beta\to+\infty} \frac{1}{\beta}\log(H_{\beta})$. This function $U$ is a calibrated subaction for $f$ and  satisfies
	\begin{align*} 
&U(0^{\infty})=0,\\
&U(1^{\infty}) =  b   + \max\left\{0,  \sum_{j=1}^{\infty}(a_{1+j}) -A \right\},\\
&U|_{[0^{q}1]} =  A   +\max\left\{0, \sum_{j=1}^{\infty}(a_{q+j}) -A\right\},\,\, q\geq 1,\\
&U|_{[1^{q}0]} =   b +A  + \max\left\{0,  \sum_{j=1}^{\infty}(a_{1+j}) -A\right\} + \max\left\{0, \sum_{j=1}^{\infty}(c_{q+j}) -A\right\}, \,\, q\geq 1.
\end{align*}	
\end{lemma}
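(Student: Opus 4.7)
The strategy is four-fold: (i) the family $\{\frac{1}{\beta}\log H_\beta\}_{\beta>0}$ is equicontinuous and uniformly bounded — this is immediate from estimate (\ref{eq12}) applied to $h_\beta$, since normalization by the constant $h_\beta(0^\infty)$ is merely a shift on the logarithmic scale; (ii) the Walters formulas displayed before the lemma show $H_\beta$ to be locally constant on each cylinder $[0^q 1]$ and $[1^q 0]$, so together with its values at $0^\infty$ and $1^\infty$ it is determined everywhere, and pointwise convergence need only be verified at these locations; (iii) equicontinuity plus pointwise convergence on a covering family then yields uniform convergence; (iv) the calibrated subaction equation is obtained by passing to the limit in the Ruelle eigenvalue identity.

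The core of the argument is step (ii). Let $(S_j)_{j\geq 0}$ be a decreasing sequence with $S_0=0$ and $\lim_j S_j = \Sigma<0$; in applications $S_j$ will be a partial sum of the form $a_{q+1}+\cdots+a_{q+j}$ or $c_{q+1}+\cdots+c_{q+j}$. I will prove the asymptotic
\[ \lim_{\beta\to+\infty}\frac{1}{\beta}\log\Bigl(1+\sum_{j=1}^{\infty} e^{\beta S_j - j P(\beta f)}\Bigr) = \max\{0,\,\Sigma-A\}, \]
invoking Lemma \ref{pressure} (so that $\frac{1}{\beta}\log P(\beta f)\to A$ and in particular $P(\beta f)>0$ for large $\beta$). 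For the lower bound, $S_j\geq\Sigma$ gives
\[ 1+\sum_{j=1}^{\infty} e^{\beta S_j - jP(\beta f)} \geq \max\Bigl\{1,\ \frac{e^{\beta\Sigma - P(\beta f)}}{1-e^{-P(\beta f)}}\Bigr\}, \]
whose $\frac{1}{\beta}\log$ tends to $\max\{0,\Sigma-A\}$. For the upper bound, fix $\epsilon>0$ and an index $J$ with $S_j<\Sigma+\epsilon$ for all $j>J$: the first $J$ terms contribute at most $1+J\,e^{\beta S_1}$ (with $S_1<0$) and the tail is dominated by $e^{\beta(\Sigma+\epsilon)}/(1-e^{-P(\beta f)})$, giving $\limsup\leq\max\{0,\Sigma+\epsilon-A\}$; letting $\epsilon\downarrow 0$ closes the estimate. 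The prefactor $(e^{P(\beta f)}-1)/e^{P(\beta f)}$ present in the Walters formulas contributes an additional $A$ on the logarithmic scale (since $\frac{1}{\beta}\log(e^{P(\beta f)}-1)\to A$ and $P(\beta f)/\beta\to 0$). Applied to each of the four Walters formulas — and using the already computed value of $U(1^\infty)$ to handle the factor $H_\beta(1^\infty)$ appearing in $H_\beta|_{[1^q 0]}$ — this reproduces the four formulas for $U$ stated in the lemma.

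For (iii), equicontinuity reduces uniform convergence on the compact space $X$ to pointwise convergence, which has been established on $\{0^\infty,1^\infty\}\cup\bigcup_{q\geq 1}([0^q 1]\cup[1^q 0])=X$. For (iv), apply $\frac{1}{\beta}\log$ to $\sum_{a\in\{0,1\}} e^{\beta f(ax)}H_\beta(ax) = e^{P(\beta f)}H_\beta(x)$; the right-hand side tends to $U(x)$ (since $P(\beta f)/\beta\to m(f)=0$), while the left tends to $\max_{a\in\{0,1\}}[f(ax)+U(ax)]$ by the standard Laplace identity for finite sums, yielding the calibrated subaction equation. The principal obstacle is the Laplace analysis in the middle paragraph: the geometric tail $\sum_j e^{-jP(\beta f)}$ sums to order $1/P(\beta f)\sim e^{-\beta A}$, so the asymptotic rate is governed by a competition between $\Sigma$ and $A$, and both regimes $\Sigma\leq A$ and $\Sigma>A$ must be treated separately to produce the $\max\{0,\Sigma-A\}$ dichotomy.
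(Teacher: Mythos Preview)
Your argument is correct. The paper itself does not give a proof here; it simply records that the result is a particular case of Proposition~2 in \cite{BLM}. What you have written is a self-contained reconstruction of that computation: the Laplace-type asymptotic
\[
\lim_{\beta\to+\infty}\frac{1}{\beta}\log\Bigl(1+\sum_{j\geq 1} e^{\beta S_j - jP(\beta f)}\Bigr)=\max\{0,\Sigma-A\}
\]
together with $\frac{1}{\beta}\log(e^{P(\beta f)}-1)\to A$ is exactly the mechanism behind the cited result, and indeed the paper performs the same kind of splitting into a finite head and a geometric tail in its own proof of Lemma~\ref{limits} (where the auxiliary fact $\frac{1}{\beta}\log\sum_{j\geq j_1}e^{-jP(\beta f)}\to -A$ is again attributed to \cite{BLM}). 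So your route is not different in substance from what \cite{BLM} does; you are simply supplying the details the present paper chose to outsource. The equicontinuity-plus-pointwise argument for uniform convergence and the passage to the limit in the eigenfunction equation for the calibrated-subaction property are both standard and sound.
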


\begin{proof}
The result can be obtained as a particular case of Prop. 2 in \cite{BLM}.
\end{proof}

\noindent
\textbf{Remark:} if $V=\lim_{\beta \to +\infty}\frac{1}{\beta}\log(h_{\beta})$ then $U= V-C$, with $C=V(0^\infty)$. 

\bigskip

We want to study the L.D.P. for the equilibrium measures $\mu_{\beta}$ and naturally any maximizing measure of $f\in \mathbf{W}$ is a convex combination of the ergodic measures supported in the periodic orbits $0^{\infty}=(000...)$ and $1^{\infty}=(111...)$. From the above lemma there exists the limit function $$R_{+}=- \lim_{\beta\to+\infty}\frac{g_\beta}{\beta}=-f-U+U\circ\sigma +m(f)=-f-U+U\circ\sigma.$$
We want to use Theorem \ref{teorema2}. In order to do that we need to find the expression of deviation function. More precisely, we need to compute $I(0^\infty)$ and $I(1^{\infty})$. In this way, considering the Lemma \ref{teorema3}, we first study $\mu_\beta([0^n])$ and $\mu_\beta([1^{n}])$.

\begin{lemma}\label{mu0n} Let $f\in \mathbf{W}$ satisfying (\ref{eq14}). Then, for $\beta >0$ and $n\geq 1$,
\begin{equation}\label{eq5}	
 \mu_\beta([0^n])= \frac{S_0^n(\beta)}{S_0(\beta)+S_1(\beta)}\,\,\,\,\text{and}\,\,\,\, \mu_\beta([1^n])= \frac{S_1^n(\beta)}{S_0(\beta)+S_1(\beta)},  
\end{equation}
where 
\begin{align*}
&S_0(\beta)=S_0^1(\beta):= \frac{ 1 + \sum_{j=1}^{\infty}(j+1)e^{\beta(a_{2}+...+a_{1+j})-jP(\beta f)}}{1+\sum_{j=1}^{\infty}e^{\beta(a_{2}+...+a_{1+j})-jP(\beta f)}},\\ \\
&S_{1}(\beta)=S_1^1(\beta) := \frac{ 1+\sum_{j=1}^{\infty}(j+1)e^{\beta(c_{2}+...+c_{1+j})-jP(\beta f)}}{ 1+\sum_{j=1}^{\infty}e^{\beta(c_{2}+...+c_{1+j})-jP(\beta f)}},
\end{align*}
and for $n\geq 2$
\begin{align*} 
&S_0^n(\beta):= \frac{e^{P(\beta f)}\sum_{j=n}^\infty (j-n+1) e^{\beta(a_{2}+...+a_{j}) - jP(\beta f)} }{\left(1+\sum_{i=1}^{\infty}e^{\beta(a_{2}+...+a_{1+i}) -iP(\beta f)}\right)},\\ \\
&S_1^n(\beta):= \frac{e^{P(\beta f)}\sum_{j=n}^\infty (j-n+1) e^{\beta(c_{2}+...+c_{j}) - jP(\beta f)} }{\left(1+\sum_{i=1}^{\infty}e^{\beta(c_{2}+...+c_{1+i}) -iP(\beta f)}\right)}.
\end{align*} 
\end{lemma}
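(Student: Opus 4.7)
The plan is to decompose $[0^n]$ into subcylinders where $h_\beta$ is constant, use the Walters formulas for $H_\beta$ together with the conformality of $\nu_\beta$, and fix the overall constant by $\mu_\beta([0])+\mu_\beta([1])=1$. Modulo the single point $\{0^\infty\}$ (which is $\nu_\beta$-null, hence $\mu_\beta$-null), one has the disjoint decomposition $[0^n]=\bigsqcup_{q\geq n}[0^q1]$, so that
\[
\mu_\beta([0^n])=\sum_{q\geq n}h_\beta|_{[0^q1]}\,\nu_\beta([0^q1]).
\]

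First I would compute $\nu_\beta([0^q1])$. Applying $L_{\beta f}^{*}\nu_\beta=e^{P(\beta f)}\nu_\beta$ to $\chi_{[0^{k}1]}$ and using that $f$ is constant on each $[0^{k}1]$ (with value $a_k$ for $k\geq 2$ and $b$ for $k=1$), an immediate induction on $q$ gives
\[
\nu_\beta([0^q1])=e^{\beta(b+a_2+\cdots+a_q)-qP(\beta f)}\,\nu_\beta([1]),
\]
with the empty-sum convention for $q=1$. The Walters formulas quoted before the lemma give $h_\beta|_{[0^q1]}=h_\beta(0^\infty)\,H_\beta|_{[0^q1]}$ as an explicit power series in $e^{-P(\beta f)}$.

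The main step is then to substitute and reorganize the resulting double series. Writing $k=q+j$, the pair $(q,k)$ with $n\leq q\leq k$ appears with multiplicity $k-n+1$, producing the sum $\sum_{k\geq n}(k-n+1)e^{\beta(a_2+\cdots+a_k)-kP(\beta f)}$. Absorbing the factor $e^{\beta b}\bigl(1+\sum_{i\geq 1}e^{\beta(a_2+\cdots+a_{1+i})-iP(\beta f)}\bigr)/e^{P(\beta f)}$ into $h_\beta(0^\infty)$ recovers $h_\beta(1^\infty)$ via the Walters expression for $H_\beta(1^\infty)$, so that
\[
\mu_\beta([0^n])=h_\beta(1^\infty)\,\nu_\beta([1])\,\frac{e^{P(\beta f)}-1}{e^{P(\beta f)}}\,S_0^n(\beta).
\]
By the same argument with the roles of $0$ and $1$ swapped, $\mu_\beta([1^n])$ is the analogous expression with $h_\beta(1^\infty)\nu_\beta([1])$ replaced by $h_\beta(0^\infty)\nu_\beta([0])$ and $S_0^n$ by $S_1^n$.

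Finally I would check that the two prefactors coincide. Summing the formula for $\nu_\beta([0^q1])$ over $q\geq 1$ gives $\nu_\beta([0])=H_\beta(1^\infty)\,\nu_\beta([1])$, hence $h_\beta(0^\infty)\,\nu_\beta([0])=h_\beta(1^\infty)\,\nu_\beta([1])$. Denoting this common value, multiplied by $(e^{P(\beta f)}-1)/e^{P(\beta f)}$, by $C$, we obtain $\mu_\beta([0^n])=C\,S_0^n(\beta)$ and $\mu_\beta([1^n])=C\,S_1^n(\beta)$; the normalization $\mu_\beta([0])+\mu_\beta([1])=1$ forces $C=1/(S_0(\beta)+S_1(\beta))$, which is (\ref{eq5}). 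The main obstacle I expect is purely bookkeeping: the algebra is elementary but very indexing-sensitive, and one has to be careful in matching the output to the somewhat intricate definitions of $S_0^n$ and $S_1^n$.
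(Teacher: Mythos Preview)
Your argument is correct and shares the same skeleton as the paper's proof: both decompose $[0^n]=\bigsqcup_{q\ge n}[0^q1]$, perform the same double-series reindexing $k=q+j$ to produce the factor $\sum_{k\ge n}(k-n+1)e^{\beta(a_2+\cdots+a_k)-kP(\beta f)}$, and then fix the overall constant via $\mu_\beta([0])+\mu_\beta([1])=1$.

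The genuine difference is in how the building block $\mu_\beta([0^q1])$ is computed and how the two prefactors are matched. The paper works directly with the normalized potential $g_\beta$: it invokes the relation $\mu_\beta([0^j1])=\mu_\beta([01])\,e^{\beta(a_2+\cdots+a_j)+\log H_\beta|_{[0^j1]}-\log H_\beta|_{[01]}-(j-1)P(\beta f)}$ (quoted from \cite{BLM}) and then uses the $\sigma$-invariance of $\mu_\beta$ to get $\mu_\beta([01])=\mu_\beta([10])$ in one line. You instead split $\mu_\beta=h_\beta\,\nu_\beta$, compute $\nu_\beta([0^q1])$ from conformality, and match the prefactors through the identity $\nu_\beta([0])=H_\beta(1^\infty)\,\nu_\beta([1])$, hence $h_\beta(0^\infty)\nu_\beta([0])=h_\beta(1^\infty)\nu_\beta([1])$. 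Your route is more self-contained (it does not rely on the formulas from \cite{BLM}) but requires one extra identity; the paper's route is shorter because $\sigma$-invariance gives the matching for free. One small point you glossed over: when you assert the $[1^n]$ case ``by symmetry'', the normalization $H_\beta(0^\infty)=1$ breaks the naive $0\leftrightarrow 1$ symmetry, and the step that makes it work is the eigenfunction identity $H_\beta(1^\infty)\,\dfrac{e^{\beta d}}{e^{P(\beta f)}}\bigl(1+\sum_{j\ge 1}e^{\beta(c_2+\cdots+c_{1+j})-jP(\beta f)}\bigr)=1$, which follows from $L_{\beta f}h_\beta=e^{P(\beta f)}h_\beta$ evaluated at $0^\infty$; it would be worth making this explicit.
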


\begin{proof}
Following \cite{BLM}, page 1351,
\begin{align*}
 S_0(\beta) &:= \frac{ 1 + \sum_{j=1}^{\infty}(j+1)e^{\beta(a_{2}+...+a_{1+j})-jP(\beta f)}}{1+\sum_{j=1}^{\infty}e^{\beta(a_{2}+...+a_{1+j})-jP(\beta f)}}\\
 &=1+\sum_{j=2}^\infty  e^{\beta (a_2+...+a_j) + \log(H_{\beta}|_{[0^j1]})- \log(H_{\beta}|_{[01]})-(j-1)P(\beta f)}
\end{align*}	
and
\begin{align*}
S_{1}(\beta) &:= \frac{ 1+\sum_{j=1}^{\infty}(j+1)e^{\beta(c_{2}+...+c_{1+j})-jP(\beta f)}}{ 1+\sum_{j=1}^{\infty}e^{\beta(c_{2}+...+c_{1+j})-jP(\beta f)}}\\
&=1+\sum_{j=2}^\infty  e^{\beta (c_2+...+c_j) + \log(H_{\beta}|_{[1^j0]})- \log(H_{\beta}|_{[10]})-(j-1)P(\beta f)}.
\end{align*}	
For $j \geq 2$, (see page 1352 in \cite{BLM})
\begin{equation}\label{eq1}
\mu_\beta([0^{j}1])=\mu_{\beta}([01])e^{\beta (a_2+...+a_{j}) +\log(H_{\beta}|_{[0^j1]}) -\log(H_{\beta}|_{[01]})  - (j-1) P(\beta f)}
\end{equation} 
and 
\begin{equation}\label{eq2}
\mu_\beta([1^{j}0])=\mu_{\beta}([10])e^{\beta (c_2+...+c_{j}) +\log(H_{\beta}|_{[1^j0]}) -\log(H_{\beta}|_{[10]})  - (j-1) P(\beta f)}.
\end{equation} 
Then 
\[ \mu_\beta([0])= \sum_{j=1}^{\infty}\mu_{\beta}([0^j1])= \mu_{\beta}([01])S_0(\beta),  \]
and
\[ \mu_\beta([1])= \sum_{j=1}^{\infty}\mu_{\beta}([1^j0])= \mu_{\beta}([10]) S_1(\beta). \]
As $\mu_\beta([01])=\mu_\beta([10])$ (because $\mu_\beta$ is $\sigma-$invariant) and $\mu_\beta([0])+\mu_\beta([1])=1$ we obtain

\begin{equation}\label{eq3}
\mu_\beta([01])=\mu_\beta([10])=\frac{1}{S_0(\beta)+S_1(\beta)}.
\end{equation}
As a consequence,
\[ \mu_\beta([0])= \frac{S_0(\beta)}{S_0(\beta)+S_1(\beta)}  \,\,\, and \,\,\, \mu_\beta([1])= \frac{S_1(\beta)}{S_0(\beta)+S_1(\beta)}. \]

From (\ref{eq1}) and (\ref{eq3}), for any $n\geq 2$,
\begin{align*}
\mu_\beta([0^n]) &= \sum_{j=n}^{\infty}\mu_{\beta}([0^j1])\\
&= \mu_{\beta}([01])\sum_{j=n}^\infty  e^{\beta (a_2+...+a_j) + \log(H_{\beta}|_{[0^j1]})- \log(H_{\beta}|_{[01]})-(j-1)P(\beta f)}\\
&= \frac{\sum_{j=n}^\infty  e^{\beta (a_2+...+a_j) + \log(H_{\beta}|_{[0^j1]})- \log(H_{\beta}|_{[01]})-(j-1)P(\beta f)}}{S_0(\beta)+S_1(\beta)} .
\end{align*}

Furthermore, 
\begin{align*} \sum_{j=n}^\infty&  e^{\beta (a_2+...+a_j) + \log(H_{\beta}|_{[0^j1]})- \log(H_{\beta}|_{[01]})-(j-1)P(\beta f)}  \\ 
&= \sum_{j=n}^\infty  \frac{e^{\beta (a_2+...+a_j) -(j-1)P(\beta f)} H_{\beta}|_{[0^j1]} }{H_{\beta}|_{[01]}}        \\
&= \sum_{j=n}^\infty  \frac{e^{\beta (a_2+...+a_j) -(j-1)P(\beta f)} \left(1+\sum_{i=1}^{\infty}e^{\beta(a_{j+1}+...+a_{j+i}) - iP(\beta f)}\right) }{\left(1+\sum_{i=1}^{\infty}e^{\beta(a_{2}+...+a_{1+i}) -iP(\beta f)}\right)}        \\
&=  \frac{\sum_{j=n}^\infty\sum_{i=0}^{\infty}e^{\beta(a_{2}+...+a_{j+i}) - (j+i-1)P(\beta f)} }{\left(1+\sum_{i=1}^{\infty}e^{\beta(a_{2}+...+a_{1+i}) -iP(\beta f)}\right)}        \\
&= \frac{e^{P(\beta f)}\sum_{m=n}^\infty (m-n+1) e^{\beta(a_{2}+...+a_{m}) - mP(\beta f)} }{\left(1+\sum_{i=1}^{\infty}e^{\beta(a_{2}+...+a_{1+i}) -iP(\beta f)}\right)}  =S_0^n(\beta). 
\end{align*}
Therefore, we finally get
\[\mu_{\beta}[0^n]=  \frac{S_0^n(\beta)}{S_0(\beta)+S_1(\beta)}.
\]
The computation for $\mu_{\beta}[1^n]$ is similar.
\end{proof} 

\bigskip

As we want to determine the limit of $\frac{1}{\beta}\log(\mu_\beta([0^n]))$ and $\frac{1}{\beta}\log(\mu_\beta([1^n]))$ (see Lemma \ref{teorema3}) the next lemma is useful.

\begin{lemma}\label{limits} 
	Let $f\in \mathbf{W}$ satisfying (\ref{eq14}). Denote  $A= \lim_{\beta \to+\infty} \frac{1}{\beta} \log(P(\beta f)).$ Under the above notations, 
	\[ \lim_{\beta \to+\infty}\frac{1}{\beta}\log (S_0(\beta))= \max\{ 0, \sum_{j=2}^\infty a_{j} -2A\} - \max\{ 0, \sum_{j=2}^\infty a_{j} -A\}, \]
	\[ \lim_{\beta \to+\infty}\frac{1}{\beta}\log (S_1(\beta))= \max\{ 0, \sum_{j=2}^\infty c_{j} -2A\} - \max\{ 0, \sum_{j=2}^\infty c_{j} -A\} \]
	and for $n\geq 2$
	\[ \lim_{\beta \to+\infty}\frac{1}{\beta}\log (S_0^n(\beta))= \max\{ a_2+...+a_n, \sum_{j=2}^\infty a_{j} -2A\} - \max\{ 0, \sum_{j=2}^\infty a_{j} -A\}, \]
	\[ \lim_{\beta \to+\infty}\frac{1}{\beta}\log (S_1^n(\beta))= \max\{ c_2+...+c_n, \sum_{j=2}^\infty c_{j} -2A\} - \max\{ 0, \sum_{j=2}^\infty c_{j} -A\}. \]
	
\end{lemma}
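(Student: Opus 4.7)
From Lemma \ref{pressure}, each branch of the formula for $A$ is a sum of finitely many strictly negative numbers, so $A<0$; consequently $P(\beta f)=e^{A\beta+o(\beta)}\to 0$, and since $1-e^{-P(\beta f)}\sim P(\beta f)$, one gets $\frac{1}{\beta}\log(1-e^{-P(\beta f)})\to A$. Write $\Sigma_a := \sum_{j\geq 2} a_j$ and $\Sigma_c := \sum_{j\geq 2} c_j$; both are finite and negative because $f$ is Lipschitz with $f(0^\infty)=f(1^\infty)=0$. The strategy is, for each of $S_0(\beta)$, $S_1(\beta)$, $S_0^n(\beta)$, $S_1^n(\beta)$, to split the defining series at some threshold $j=J$ into a finite-$j$ part and a $j\geq J$ tail, apply the principle of the largest exponent (i.e. $\frac{1}{\beta}\log(\sum_j e^{\beta r_j(\beta)})\to\sup_j\lim_\beta r_j(\beta)$ when the prefactors are at worst polynomial) to each, and then let $J\to\infty$ after $\beta\to\infty$.

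For the common denominator $D(\beta):=1+\sum_{j\geq 1}e^{\beta(a_2+\ldots+a_{1+j})-jP(\beta f)}$ of $S_0(\beta)$ and $S_0^n(\beta)$, the finite-$j$ terms give rates $0,a_2,a_2+a_3,\ldots$, all $\leq 0$. Writing $a_2+\ldots+a_{1+j}=\Sigma_a-\varepsilon_j$ with $\varepsilon_j\searrow 0$, the tail becomes $e^{\beta\Sigma_a}\sum_{j\geq J}e^{\beta\varepsilon_j-jP(\beta f)}$; since $\varepsilon_j$ can be made uniformly arbitrarily small on $j\geq J$ for $J$ large, this is comparable to $e^{\beta\Sigma_a}\cdot(1-e^{-P(\beta f)})^{-1}\asymp e^{\beta(\Sigma_a-A)}$, giving $\frac{1}{\beta}\log D(\beta)\to\max\{0,\Sigma_a-A\}$.

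The numerator $N_0(\beta):=1+\sum_{j\geq 1}(j+1)e^{\beta(a_2+\ldots+a_{1+j})-jP(\beta f)}$ of $S_0(\beta)$ is handled identically, except that in the tail the weight $(j+1)$ replaces the geometric factor $(1-e^{-P(\beta f)})^{-1}$ by $(1-e^{-P(\beta f)})^{-2}-1\asymp e^{-2A\beta}$, so the tail rate becomes $\Sigma_a-2A$ and $\frac{1}{\beta}\log N_0(\beta)\to\max\{0,\Sigma_a-2A\}$. Subtracting proves the first assertion. For $S_0^n(\beta)$ the prefactor $e^{P(\beta f)}\to 1$ is negligible; the $j=n$ term in the numerator provides the finite rate $a_2+\ldots+a_n$, and, using $\sum_{j\geq n}(j-n+1)x^j=x^n/(1-x)^2$, the tail again provides rate $\Sigma_a-2A$, yielding the third assertion. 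The two $c$-formulas are proved by the symmetric argument after swapping $a_j\leftrightarrow c_j$.

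The main technical obstacle is justifying uniformly in $\beta$ the approximation $a_2+\ldots+a_{1+j}\approx\Sigma_a$ used in the tail, so that the estimate $\sum_j e^{-jP(\beta f)}\asymp e^{-A\beta}$ (and its differentiated analogue) survives after taking $\frac{1}{\beta}\log$. This is handled by the order of limits: fix $\delta>0$, choose $J$ so that $|\varepsilon_j|<\delta$ for $j\geq J$, take $\beta\to\infty$ (obtaining upper and lower rates within $\delta$ of the asserted values), and let $\delta\to 0$. Summability of $\sum a_j$ and $\sum c_j$, ensured by the Lipschitz property of $f$, is precisely what makes this reduction valid.
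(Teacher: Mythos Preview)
Your argument is correct and follows essentially the same route as the paper: split each series at a truncation index, observe that the finite part contributes the rates $0$ (resp.\ $a_2+\cdots+a_n$), use the geometric/differentiated-geometric identities $\sum_{j\ge j_0}e^{-jP}\asymp e^{-A\beta}$ and $\sum_{j\ge j_0}(j+1)e^{-jP}\asymp e^{-2A\beta}$ for the tail, approximate $a_2+\cdots+a_{1+j}$ by $\Sigma_a$ on the tail, and then send the truncation index to infinity after $\beta\to\infty$. The paper phrases this as separate $\liminf$/$\limsup$ bounds (using $a_2+\cdots+a_{1+j}\ge\Sigma_a$ for the lower bound and $a_2+\cdots+a_{1+j}\le a_2+\cdots+a_{j_0}$ for the upper), but this is exactly your $\delta$-argument in different clothing; one small sign slip to fix is that with your convention $a_2+\cdots+a_{1+j}=\Sigma_a-\varepsilon_j$ one has $\varepsilon_j<0$ and $\varepsilon_j\nearrow 0$, so the tail factor is $e^{-\beta\varepsilon_j}$ rather than $e^{\beta\varepsilon_j}$.
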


\begin{proof}
	We only present the prove  of the first equation, because the arguments are similar for the other cases.
	Initially, observe that for any $j_1 \geq 0$,
	\[ \lim_{\beta \to+\infty} \frac{1}{\beta} \log\left( \sum_{j\geq j_1}(j+1)e^{-jP(\beta f)}  \right)= -2A  \]
	and
	\[  \lim_{\beta \to+\infty} \frac{1}{\beta} \log\left( \sum_{j\geq j_1}e^{-jP(\beta f)}  \right)= -A \]
	(see Cor. 14 in \cite{BLM}).

As 
\begin{align*} 
\frac{1}{\beta}\log (S_0(\beta))=&    \frac{1}{\beta}\log[ 1 + \sum_{j=1}^{\infty}(j+1)e^{\beta(a_{2}+...+a_{1+j})-jP(\beta f)}]\\
& -   \frac{1}{\beta}\log[1+\sum_{j=1}^{\infty}e^{\beta(a_{2}+...+a_{1+j})-jP(\beta f)}],
\end{align*}
we will study the limit for $\frac{1}{\beta}\log[ 1 + \sum_{j=1}^{\infty}(j+1)e^{\beta(a_{2}+...+a_{1+j})-jP(\beta f)}]$ and $  \frac{1}{\beta}\log[1+\sum_{j=1}^{\infty}e^{\beta(a_{2}+...+a_{1+j})-jP(\beta f)}]$.

As $a_i <0\,\, \forall i\in \{2,3,4,...\}$ we have,
\begin{align*}
  \liminf_{\beta \to+\infty}&\frac{1}{\beta}\log\left(1 + \sum_{j=1}^{\infty}(j+1)e^{\beta(a_{2}+...+a_{1+j})-jP(\beta f)}\right) \\
&\geq \max\left\{ 0\,,\,\, \liminf_{\beta \to+\infty}\frac{1}{\beta}\log\left( e^{\beta\sum_{i\geq 2 }a_{i}}\sum_{j=1}^{\infty}(j+1)e^{-jP(\beta f)}\right)\right\} \\ 
&= \max\{ 0, \sum_{i\geq 2}a_{i} -2A\}.  
\end{align*} 
Furthermore,  for any fixed $j_0$, rewriting $$\left(1 + \sum_{j=1}^{\infty}(j+1)e^{\beta(a_{2}+...+a_{1+j})-jP(\beta f)}\right)$$
in the form $$ \left[1 + \sum_{j=1}^{j_0-1}(j+1)e^{\beta(a_{2}+...+a_{1+j})-jP(\beta f)}\right]  + \left[ \, \sum_{j=j_0}^{\infty}(j+1)e^{\beta(a_{2}+...+a_{1+j})-jP(\beta f)}\,\right]$$ we have
\begin{align*}
 \limsup_{\beta \to+\infty}&\frac{1}{\beta}\log\left(1 + \sum_{j=1}^{\infty}(j+1)e^{\beta(a_{2}+...+a_{1+j})-jP(\beta f)}\right) \\
 &= \max\left\{ 0\,,\,\, \limsup_{\beta \to+\infty}\frac{1}{\beta}\log\left( \sum_{j=j_0}^{\infty}(j+1)e^{\beta(a_{2}+...+a_{1+j})-jP(\beta f)}\right)\right\} \\ 
&\leq \max\left\{ 0\,,\,\, \limsup_{\beta \to+\infty}\frac{1}{\beta}\log\left( e^{\beta(a_{2}+...+a_{j_0})} \sum_{j=j_0}^{\infty}(j+1)e^{-jP(\beta f)}\right)\right\} \\ 
&= \max\{ 0, a_{2}+...+a_{j_0} -2A\}.  
\end{align*}
Thus, as we can consider $j_0$ large enough,  
\[\limsup_{\beta \to+\infty}\frac{1}{\beta}\log\left(1 + \sum_{j=1}^{\infty}(j+1)e^{\beta(a_{2}+...+a_{1+j})-jP(\beta f)}\right) \leq\max\{ 0, \sum_{j=2}^\infty a_{j} -2A\}.  \]
The conclusion is that
\[ \lim_{\beta \to+\infty}\frac{1}{\beta}\log\left(1 + \sum_{j=1}^{\infty}(j+1)e^{\beta(a_{2}+...+a_{1+j})-jP(\beta f)}\right) =\max\{ 0, \sum_{j=2}^\infty a_{j} -2A\}.\]
With similar arguments we obtain
\[\lim_{\beta \to+\infty}\frac{1}{\beta}\log\left(1+\sum_{j=1}^{\infty}e^{\beta(a_{2}+...+a_{1+j})-jP(\beta f)}	\right)=\max\{ 0, \sum_{j=2}^\infty a_{j} -A\}.\]
\end{proof}

Now we  show that the family of measures $\mu_\beta$ satisfies a L.D.P. and present the expression of the deviation function. We remark that, when the maximizing measure of a potential $f$ is unique, the deviation function in \cite{BLT} is equal to $R_+^\infty$. For the class of potentials that we consider in this section, we have $R_+^{\infty}(0^\infty)= R_+^{\infty}(1^\infty)=0$. However in the theorem below $I(0^\infty)\neq 0$, which means, $I\neq R_+^\infty$ (see also Theo. 3 and page 1343 in \cite{BLM}).

\begin{theorem} Let $f\in \mathbf{W}$ satisfying (\ref{eq14}).
Suppose $\sum_{j\geq 2}a_j < b+d+\sum_{j\geq 2}c_j$. Then $(\mu_\beta)_{\beta>0}$ satisfies a Large Deviation Principle with deviation function $I$ defined by $$I(0^\infty)= 	b+d+\sum_{j\geq 2}c_j -\sum_{j\geq 2}a_j, \,\,\,\,\,\,\,\,\,\, I(1^\infty)=0$$ and for any $x\in \{0,1\}^\mathbb{N}$,
\[I(x) = \left\{ \begin{array}{ll}
 R_+^n(x) + I(0^\infty) & \text{if}\,\, x=(x_1...x_n0^\infty) \\  
 R_+^n(x)  &  \text{if}\,\, x=(x_1...x_n1^\infty)\\
 +\infty & else
 \end{array} \right.,  \]
 where $R_+ = -f -U + U\circ\sigma$ and $U$ satisfies
\[U(0^{\infty})=0,\,\,\,\,\, U|_{[0^{q}1]} =  \max\left\{b+d+\sum_{j=2}^{\infty}c_{j}, \sum_{j=1}^{\infty}a_{q+j} \right\}, \, q\geq 1,
\]
\[U(1^{\infty}) =  b,\,\,\,\,\,\, and \,\,\,\,\,\,\,
U|_{[1^{q}0]} =   b  +  \sum_{j=1}^{\infty}c_{q+j},\, q\geq 1.
\]	
\end{theorem}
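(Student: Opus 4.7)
The strategy is to apply Theorem \ref{teorema2} after verifying its hypothesis for this family, compute $I$ at the two fixed points $0^\infty$ and $1^\infty$ using Lemmas \ref{mu0n} and \ref{limits}, and then propagate the values to the rest of $\{0,1\}^\mathbb{N}$ through the cohomological identity $I(x) = R_+(x) + I(\sigma(x))$.

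The first step is to compute $I(0^\infty)$ and $I(1^\infty)$. Under the hypothesis $\sum_{j\geq 2}a_j < A$, where $A = b + d + \sum_{j\geq 2}c_j$ (and $A<0$), a case analysis of the $\max$ expressions in Lemma \ref{limits} yields, for $n$ sufficiently large, $\lim_\beta \frac{1}{\beta}\log S_0^n(\beta) = \sum_{j\geq 2}a_j - 2A$ and $\lim_\beta \frac{1}{\beta}\log S_1^n(\beta) = -A$, and similarly $\lim_\beta \frac{1}{\beta}\log S_0(\beta) = \sum_{j\geq 2}a_j - 2A$, $\lim_\beta \frac{1}{\beta}\log S_1(\beta) = -A$. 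Since the hypothesis is equivalent to $\sum a_j - 2A < -A$, the denominator $S_0(\beta)+S_1(\beta)$ in Lemma \ref{mu0n} is asymptotically dominated by $S_1(\beta)$. Taking $\frac{1}{\beta}\log$ of the ratios in Lemma \ref{mu0n} and then sending $n\to\infty$ gives $I(0^\infty) = A - \sum_{j\geq 2}a_j = b+d+\sum_{j\geq 2}c_j - \sum_{j\geq 2}a_j$ and $I(1^\infty) = 0$, matching the statement.

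The second step is to verify the hypothesis of Theorem \ref{teorema2}: existence of $\lim_\beta \frac{1}{\beta}\log \mu_\beta(k)$ for every cylinder $k$. A general cylinder decomposes into alternating blocks of $0$s and $1$s, and its $\mu_\beta$-measure admits an explicit Walters-style expression (generalizing Lemma \ref{mu0n}, cf.\ \cite{BLM}) as a sum of exponentials in $\beta$ whose dominant exponent is extracted exactly as in Lemma \ref{limits}. Once the hypothesis is verified, Theorem \ref{teorema2} delivers the LDP together with the cohomological identity $I = R_+ + I\circ\sigma$, where $R_+ = -f - U + U\circ\sigma$ and $U$ is the normalized limit $V - V(0^\infty)$ of Lemma \ref{Vlimit}. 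The hypothesis $\sum_{j\geq 1}a_{1+j} < A$ forces the inner max at $1^\infty$ and on $[1^q 0]$ to vanish, yielding $U(1^\infty) = b$, $U|_{[1^q 0]} = b + \sum_{j\geq 1}c_{q+j}$ and $U|_{[0^q 1]} = \max\{A, \sum_{j\geq 1}a_{q+j}\}$, which is precisely the $U$ of the statement. Iterating $I(x) = R_+^n(x) + I(\sigma^n x)$ then gives $I(x) = R_+^n(x) + I(0^\infty)$ for $x = (x_1 \ldots x_n 0^\infty)$ and $I(x) = R_+^n(x)$ for $x = (x_1 \ldots x_n 1^\infty)$.

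The principal obstacle is the remaining case, namely $x$ containing infinitely many $0 \leftrightarrow 1$ transitions. Since Theorem \ref{teorema2}(2) and the nonnegativity of $I$ give $I \geq R_+^\infty$, it suffices to prove $R_+^\infty(x) = +\infty$. Using the closed form of $U$ above, a direct computation on the transition cylinder $[01]$ shows that for any $z \in [01]$ one has $R_+(z) = -A + \sum_{j\geq q+1}c_j$, where $q$ is the length of the initial $1$-block of $\sigma z$ (with $q = \infty$ allowed when $\sigma z = 1^\infty$); in every case this is bounded below uniformly by $-b-d > 0$. Because any point with infinitely many transitions visits $[01]$ infinitely often under $\sigma$, the nonnegative series $R_+^\infty(x) = \sum_{k\geq 0} R_+(\sigma^k x)$ diverges, hence $I(x) = +\infty$. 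This uniform lower bound at transition cylinders, which rests on the specific form of $U$ and on the standing hypothesis, is the main technical point of the argument.
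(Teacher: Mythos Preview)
Your overall strategy matches the paper's: compute $I(0^\infty)$ and $I(1^\infty)$ via Lemmas \ref{mu0n} and \ref{limits}, identify $U$ and $R_+$, show $R_+^\infty(x)=+\infty$ when $01$ occurs infinitely often (your lower bound $R_+|_{[01]}\geq -b-d$ is exactly the paper's estimate), and then propagate via $I=R_+ + I\circ\sigma$.

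The one substantive difference is how you establish the hypothesis of Theorem \ref{teorema2}. You propose to compute $\lim_{\beta}\frac{1}{\beta}\log\mu_\beta(k)$ directly for \emph{every} cylinder via explicit Walters-type formulas. The paper avoids this entirely by a subsequence bootstrap: given any cylinder $k_0$ and any accumulation point of $\frac{1}{\beta}\log\mu_\beta(k_0)$, it passes (Cantor diagonal, Remark 3) to a subsequence $\beta_{i_j}$ along which all cylinder limits exist; the subsequence version of Theorem \ref{teorema2} then applies, and item 3 (equation (\ref{eq7})) shows the resulting deviation function is completely determined by its values at $X_{max}(f)=\{0^\infty,1^\infty\}$. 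Since those two values are already known to exist along the full family (your first step), every subsequence yields the same $I$, hence the full limit exists and equals $-\inf_{k_0}I$. This is cleaner than the direct computation you sketch but do not carry out; your route is feasible in principle but would require writing $\mu_\beta$ of an arbitrary cylinder as an explicit ratio of exponential sums, which is tedious.

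One small inaccuracy: your claim $\lim_\beta\frac{1}{\beta}\log S_0(\beta)=\sum_{j\geq 2}a_j-2A$ is not always correct under the hypothesis (the value is $\max\{0,\sum a_j-2A\}$, and $\sum a_j-2A$ can be negative). It is harmless here because, as you note, $S_1$ dominates the denominator; the paper keeps the $\max$ and then observes $\max\{0,\sum a_j-2A\}\leq -A$.
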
 

\begin{proof}
First note that, with the hypothesis $\sum_{j\geq 2}a_j < b+d+\sum_{j\geq 2}c_j$, from Lemma \ref{pressure}, $A := \lim_{\beta \to +\infty}\frac{1}{\beta}\log(P(\beta f))= b+d+\sum_{j\geq 2}c_j$. Then, 
\begin{equation}\label{eq18}
\sum_{j\geq 2}a_j < A < \sum_{j\geq 2}c_j
\end{equation}
 and this function $U$ coincides with the one in Lemma \ref{Vlimit}, which means, $U=\lim_{\beta \to +\infty}\frac{1}{\beta}\log (H_\beta)$. Particularly, $R_+= -f -U + U\circ\sigma$ is the uniform limit of $-\frac{g_\beta}{\beta}$, when $\beta \to +\infty$.

\bigskip

\textit{claim:} 
\[\lim_{n\to+\infty}\lim_{\beta \to+\infty}\frac{1}{\beta}\log(\mu_\beta([0^n])) =   \sum_{j\geq 2} a_{j} -A  \]
and
\[\lim_{n\to+\infty}\lim_{\beta \to+\infty}\frac{1}{\beta}\log(\mu_\beta([1^n]))=0.\]
 
\bigskip
 
Indeed, from (\ref{eq18}), Lemma \ref{mu0n} and Lemma \ref{limits} we get, for n  large enough,
\begin{align*}
\lim_{\beta \to+\infty}&\frac{1}{\beta}\log(\mu_\beta([0^n])) = \lim_{\beta \to+\infty}\frac{1}{\beta}\log\left(\frac{S_0^n(\beta)}{S_0(\beta)+S_1(\beta)}  \right) \\
&= \lim_{\beta \to+\infty}\frac{1}{\beta}\log(S_0^n(\beta))-\max\left\{\lim_{\beta \to+\infty}\frac{1}{\beta}\log(S_0(\beta)), \lim_{\beta \to+\infty}\frac{1}{\beta}\log(S_1(\beta))  \right\} \\
&= \left[\sum_{j=2}^\infty a_{j} -2A\right] - \max\left\{ \max\{0,\sum_{j=2}^\infty a_{j} -2A\}\, ,\, -A\right\} \\
&=  \left[\sum_{j=2}^\infty a_{j} -2A\right]-(-A)=  \sum_{j=2}^\infty a_{j} - A
\end{align*}
and
\begin{align*}
\lim_{\beta \to+\infty}&\frac{1}{\beta}\log(\mu_\beta([1^n])) = \lim_{\beta \to+\infty}\frac{1}{\beta}\log\left(\frac{S_1^n(\beta)}{S_0(\beta)+S_1(\beta)}   \right)\\
&=\lim_{\beta \to+\infty}\frac{1}{\beta}\log(S_1^n(\beta))-\max\{\lim_{\beta \to+\infty}\frac{1}{\beta}\log(S_0(\beta)), \lim_{\beta \to+\infty}\frac{1}{\beta}\log(S_1(\beta))  \} \\
&= (-A)-(-A)   = 0.
\end{align*}
This concludes the proof of claim.

\bigskip

Let $I:X\to [0,+\infty]$ be defined by
\begin{align*}
I(0^\infty) &= - \lim_{n\to+\infty}\lim_{\beta \to+\infty}\frac{1}{\beta}\log(\mu_\beta([0^n]))= b+d+\sum_{j\geq 2}c_j -\sum_{j\geq 2}a_j,\\ 
I(1^\infty)&=- \lim_{n\to+\infty}\lim_{\beta \to+\infty}\frac{1}{\beta}\log(\mu_\beta([1^n]))=0
\end{align*}
 and, for any $x=(x_1x_2x_3...)$, $x\notin \{0^\infty, 1^\infty\}$,
\begin{equation} \label{eq17} 
I(x) = \inf_{y\in \{0^\infty,1^\infty\} } \liminf_{n\to+\infty}\left(R_{+}^{n}(x_{1}...x_{n}y) + I(y)\right). 
\end{equation} 
(It can be checked that equation (\ref{eq17}) is satisfied for $x=0^\infty$ and $x=1^\infty,$ but this is not necessary). 

\bigskip

 For any cylinder $k\subset \{0,1\}^\mathbb{N}$, we claim that
\begin{equation}\label{eq15} 
\lim_{\beta \to +\infty}\frac{1}{\beta}\log(\mu_{\beta}(k))= -\inf_{x\in k} I(x).
\end{equation} 
Indeed, for a given cylinder $k_0$, from $(\ref{eq6})$, the family $(\frac{1}{\beta}\log(\mu_\beta(k_0)))$ is bounded. If for a sequence $\beta_i \to +\infty$, we have that $\frac{1}{\beta_i}\log(\mu_{\beta_i}(k_0))$ converges, then (following the Remark 3., which appears below the Theorem \ref{teorema2}) for some subsequence $\beta_{i_j}$ of $\beta_i$ and for any cylinder $k\subset X$, we have 
\[\lim_{\beta_{i_j} \to+\infty}\frac{1}{\beta_{i_j}}\log(\mu_{\beta_{i_j}}(k))= -\inf_{x\in k} I(x).\]
Particularly, we get
\[\lim_{\beta_{i} \to +\infty}\frac{1}{\beta_{i}}\log(\mu_{\beta_{i}}(k_0))=\lim_{\beta_{i_j} \to+\infty}\frac{1}{\beta_{i_j}}\log(\mu_{\beta_{i_j}}(k_0))= -\inf_{x\in k_0} I(x). \]
This argument proves that 
\[\lim_{\beta \to +\infty}\frac{1}{\beta}\log(\mu_{\beta}(k_0))= -\inf_{x\in k_0} I(x), \]
which concludes the proof of (\ref{eq15}).
 
Now we study the function $I$. Given a point $x=(x_1x_2x_3...)$ for which $01$ occurs infinitely many times, we have $I(x)=+\infty$ because  $I\geq R_+^\infty$ and for each occurrence of $01$ in $x$,
\begin{align*}
R_+(01x_sx_{s+1}...)&= -b - U(01x_s...)+ U(1x_s...)   \\
&\geq -b-(b+d+\sum_{j\geq 2}c_j) + (b  +  \sum_{j\geq 2} c_{j})\\&=-b-d>0. 
\end{align*}
Then, from $(\ref{IandR})$,
\[I(x) = \left\{ \begin{array}{ll}
R_+^n(x) +  I(0^\infty) & \text{if}\,\, x=(x_1...x_n0^\infty) \\  
R_+^n(x) + I(1^\infty) &  \text{if}\,\, x=(x_1...x_n1^\infty)\\
+\infty & else
\end{array} \right. .  \]
\end{proof}

Assuming $\sum_{j\geq 2}c_j < b+d+\sum_{j\geq 2}a_j$ we get a symmetric result.

\begin{theorem} Let $f\in \mathbf{W}$ satisfying (\ref{eq14}).
	Suppose $\sum_{j\geq 2}a_j \geq b+d+\sum_{j\geq 2}c_j$ and $\sum_{j\geq 2}c_j \geq b+d+\sum_{j\geq 2}a_j$. Then, $\mu_\beta$ satisfies a Large Deviation Principle with deviation function  $I(x)=R_+^\infty(x)$. More precisely, $$I(0^\infty)= 	0, \,\,\,\,\,\,\,\,\,\, I(1^\infty)=0$$ and for any $x\in \{0,1\}^\mathbb{N}$,
	\[I(x) = \left\{ \begin{array}{ll}
	R_+^n(x)  & \text{if}\,\, x=(x_1...x_n0^\infty) \,\,or \,\, x=(x_1...x_n1^\infty)\\  
	+\infty & else
	\end{array} \right.,  \]
	where $R_+ = -f -U + U\circ\sigma$ and $U$ satisfies
	
	\begin{align*} 
	&U(0^{\infty})=0,\\
	&U(1^{\infty}) =  \frac{b}{2}-\frac{d}{2}   + \frac{1}{2}\sum_{j\geq 2} a_j - \frac{1}{2}\sum_{j\geq 2} c_j,\\
	&U(0^{q}1z) =   \sum_{j\geq 1}a_{q+j},\\
	&U(1^{q}0z) =  \frac{b}{2}-\frac{d}{2} + \frac{1}{2}\sum_{j\geq 2} a_j - \frac{1}{2}\sum_{j\geq 2} c_j  + \sum_{j\geq 1}c_{q+j} .
	\end{align*}
	
\end{theorem}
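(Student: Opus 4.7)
The plan is to mirror the structure of the proof of the previous theorem, adapting each step to the balanced regime where both $0^\infty$ and $1^\infty$ contribute equally to the zero-temperature limit.

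First, I would identify the constant $A$ and the subaction $U$. The two-sided hypothesis places the parameters in the ``other cases'' branch of Lemma \ref{pressure}, giving
\[ A = \frac{b+d}{2} + \frac{1}{2}\sum_{j\geq 2} a_j + \frac{1}{2}\sum_{j\geq 2} c_j. \]
Rearranging the two inequalities yields $\sum_{j\geq 2} a_j \geq A$ and $\sum_{j\geq 2} c_j \geq A$, and since $a_j, c_j<0$ the same holds for every tail $\sum_{j\geq q+1} a_j \geq A$ and $\sum_{j\geq q+1} c_j \geq A$ ($q\geq 1$). Substituting into Lemma \ref{Vlimit} every $\max\{0,\cdot\}$ selects its second argument, and a short algebraic simplification recovers the closed form of $U$ stated in the theorem. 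Consequently $R_+ = -f - U + U \circ \sigma$ is the uniform limit of $-g_\beta/\beta$ by Theorem \ref{teorema2}.

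Second, I would compute $\lim_{n,\beta\to+\infty}\frac{1}{\beta}\log\mu_\beta([0^n])$ and the symmetric quantity for $[1^n]$ using Lemma \ref{mu0n} to write $\mu_\beta([0^n]) = S_0^n(\beta)/(S_0(\beta)+S_1(\beta))$. Under our hypothesis $\sum a_j - A \geq 0$ and $\sum a_j - 2A = -(b+d)-\sum c_j > 0$, so Lemma \ref{limits} gives $\lim_\beta \frac{1}{\beta}\log S_0(\beta) = (\sum a_j - 2A) - (\sum a_j - A) = -A$, and symmetrically $\lim_\beta \frac{1}{\beta}\log S_1(\beta) = -A$. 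For $n$ large, the partial sum $a_2+\cdots+a_n$ drops below $\sum a_j - 2A$, whence $\lim_\beta \frac{1}{\beta}\log S_0^n(\beta) = -A$ as well. Dividing, $\lim_{n,\beta}\frac{1}{\beta}\log\mu_\beta([0^n]) = 0$, and symmetrically for $[1^n]$. Thus $I(0^\infty) = I(1^\infty) = 0$.

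Third, I would use item 3 of Theorem \ref{teorema2} to describe $I$ globally. Since $X_{max}(f) = \{0^\infty, 1^\infty\}$ and $I$ vanishes on both, Theorem \ref{teorema2} collapses to $I(x) = R_+^\infty(x)$ whenever $R_+^\infty(x) < +\infty$. For $x$ of the form $(x_1\ldots x_n 0^\infty)$ or $(x_1\ldots x_n 1^\infty)$, $R_+$ vanishes on the tail, so $R_+^\infty(x) = R_+^n(x) < +\infty$, yielding the first two branches of the piecewise formula. For $x$ with infinitely many block-transitions, direct computation gives $R_+(y) = \sum_{j\geq q+1} c_j - A \geq 0$ on $[01]\cap\sigma^{-1}([1^q 0])$ and $R_+(y) = -(b+d) + \sum_{j\geq q+1} a_j - \sum_{j\geq 2} c_j \geq 0$ on $[10]\cap\sigma^{-1}([0^q 1])$; the hypotheses force these lower bounds to be strictly positive over the relevant transitions, accumulating to $+\infty$. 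The L.D.P. itself is obtained as in the previous theorem by the Cantor diagonalization of remark 3 following Theorem \ref{teorema2}: any accumulation point of $\frac{1}{\beta}\log\mu_\beta(k)$ is the limit along a subsequence that satisfies the full hypothesis of Theorem \ref{teorema2}, whose deviation function is pinned down by the values at $0^\infty, 1^\infty$ together with (\ref{eq7}), forcing agreement with our $I$ and hence convergence of the full family.

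The main obstacle is the divergence $R_+^\infty(x) = +\infty$ in the boundary regimes of the hypothesis, where one of the two inequalities holds with equality. For instance, if $\sum a_j = b+d+\sum c_j$, then $R_+$ vanishes on the cylinder $[101]$, so individual transitions of the type $10\to 01$ can contribute exactly zero. The fix is to pair every vanishing transition with its neighboring transition of the opposite orientation, which by our explicit formulas is necessarily nonvanishing and bounded below by a positive constant (at worst $-(b+d)>0$); the sum of $R_+$ over any such ``cycle'' is then strictly positive, and infinitely many cycles force divergence, covering every $x$ that is not eventually $0^\infty$ or $1^\infty$.
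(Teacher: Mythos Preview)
Your proposal is correct and follows essentially the same route as the paper: identify $A$ via Lemma~\ref{pressure}, verify the stated $U$ from Lemma~\ref{Vlimit}, use Lemmas~\ref{mu0n} and~\ref{limits} to get $I(0^\infty)=I(1^\infty)=0$, invoke the Cantor diagonalization of Remark~3 together with Theorem~\ref{teorema2} to pin down the L.D.P., and finish by showing $R_+^\infty(x)=+\infty$ for points with infinitely many transitions. The paper handles the divergence step slightly differently, bounding $R_+(01x_s\ldots)\geq\tfrac12(\sum c_j-b-d-\sum a_j)$ and $R_+(10x_s\ldots)\geq\tfrac12(\sum a_j-b-d-\sum c_j)$ and observing that their sum is $-b-d>0$, which is exactly your pairing idea; your explicit formula $-(b+d)+\sum_{j\geq q+1}a_j-\sum_{j\geq 2}c_j$ on $[10]\cap\sigma^{-1}([0^q1])$ has a minor algebraic slip (the correct value is $\sum_{j\geq q+1}a_j-A$), but this does not affect the argument.
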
 

\noindent
\textbf{Remark:} The above formulas for $U$ can have a more  symmetric expression if we add the constant $\frac{d}{2} + \frac{1}{2}\sum_{j\geq 2} c_j$. This is irrelevant when we consider the coboundary $U-U\circ\sigma$ in $R_+$. Thus we can consider $U$ defined by the formulas
\[U(0^{\infty})=\frac{d}{2} + \frac{1}{2}\sum_{j\geq 2} c_j,
\,\,\,\,\,\,\,U(1^{\infty}) =  \frac{b}{2}   + \frac{1}{2}\sum_{j\geq 2} a_j,\]
\[U(0^{q}1z) =   \frac{d}{2} + \frac{1}{2}\sum_{j\geq 2} c_j +\sum_{j\geq 1}a_{q+j},\,\,\,\,\,\,\,U(1^{q}0z) =  \frac{b}{2} + \frac{1}{2}\sum_{j\geq 2} a_j  + \sum_{j\geq 1}c_{q+j} .
\]

\begin{proof}
	We remark that in the present case (see Lemma \ref{pressure}), $$A=\lim_{\beta \to +\infty}\frac{1}{\beta}\log(P(\beta f)) =\frac{b+d}{2} +\sum_{j=1}^{\infty}\frac{a_{1+j}}{2} +\sum_{j=1}^{\infty}\frac{c_{1+j}}{2}. $$
		The proof of this theorem follows the same lines of the above one. We only  present some of the steps.
	
	\textit{first:} $I(0^\infty)=I(1^\infty)=0$. Indeed, as 
	\[\sum_{j\geq 2}a_j \geq A \geq 2A\,\,\,\,\, and \,\,\,\,\,\sum_{j\geq 2}c_j \geq A \geq 2A, \]
then,	from  lemmas \ref{mu0n} and \ref{limits} we get, for n  large enough, 
	\begin{align*} 
	\lim_{\beta \to+\infty}&\frac{1}{\beta}\log(\mu_\beta([0^n])) \\ 
	&= \lim_{\beta \to+\infty}\frac{1}{\beta}\log( S_0^n(\beta)) - \max\{\lim_{\beta \to+\infty}\frac{1}{\beta}\log( S_0(\beta))  , \lim_{\beta \to+\infty}\frac{1}{\beta}\log( S_1(\beta)) \} \\
	&=-A - \max\{-A, -A\} = 0
	\end{align*}
	and, similarly, 
	\[\lim_{\beta \to+\infty}\frac{1}{\beta}\log(\mu_\beta([1^n])) = -A - \max\{-A, -A\} = 0.\]

	\textit{second:} given a point $x=(x_1x_2x_3...)$ in which $01$ occurs infinitely many times, we have $I(x)=+\infty$. Indeed, as in this case we have $10$ occurring infinitely many times too, considering $R_+^\infty(x)$, for each occurrence of $01$ or $10$ in $x$, we get
	\begin{align*}
	R_+(01x_sx_{s+1}...)&\geq -b -\sum_{j\geq 2}a_j + ( \frac{b}{2}-\frac{d}{2} + \frac{1}{2}\sum_{j\geq 2} a_j - \frac{1}{2}\sum_{j\geq 2} c_j  + \sum_{j\geq 1}c_{1+j}) \\
	&=\frac{1}{2}(\sum_{j\geq 2} c_j - b-d-\sum_{j\geq 2}a_j)\geq 0; \\
	R_+(10x_sx_{s+1}...)&\geq -d - (\frac{b}{2}-\frac{d}{2} + \frac{1}{2}\sum_{j\geq 2} a_j + \frac{1}{2}\sum_{j\geq 2} c_j  ) + \sum_{j\geq 2} a_j\\
	&=\frac{1}{2}(\sum_{j\geq 2} a_j - b-d-\sum_{j\geq 2}c_j) \geq 0. 
	\end{align*}
	This numbers are not zero simultaneously, because their sum results in $-b-d$. Therefore $R_+^\infty(x)=+\infty$. 

\bigskip
	
	From this computations we conclude that the deviation function satisfies,
	\[I(x) = \left\{ \begin{array}{ll}
    0  & \text{if}\,\, x=0^\infty \,\, or \,\, x=1^\infty \\
	R_+^n(x)  & \text{if}\,\, x=(x_1...x_n0^\infty) \,\,or \,\, x=(x_1...x_n1^\infty)\\  
	+\infty & else
	\end{array} \right. \,.  \]
	
\end{proof}

\end{document}